\newtheorem{theorem}{Theorem}
\theoremstyle{plain}
\newtheorem{axiom}[theorem]{Axiom}
\newtheorem{conjecture}[theorem]{Conjecture}
\newtheorem{corollary}[theorem]{Corollary}
\newtheorem{definition}[theorem]{Definition}
\newtheorem{example}[theorem]{Example}
\newtheorem{exercise}[theorem]{Exercise}
\newtheorem{lemma}[theorem]{Lemma}
\newtheorem{notation}[theorem]{Notation}
\newtheorem{proposition}[theorem]{Proposition}
\newtheorem{remark}[theorem]{Remark}
\numberwithin{equation}{section}
\chardef\@x10\chardef\@xv60
\def\tcitime{
\def\@time{%
  \@minute\time\@hour\@minute\divide\@hour\@xv
  \ifnum\@hour<\@x 0\fi\the\@hour:%
  \multiply\@hour\@xv\advance\@minute-\@hour
  \ifnum\@minute<\@x 0\fi\the\@minute
  }}%
\def\QCTOpt[#1]#2{%
  \def\QCTOptB{#1}
  \def\QCTOptA{#2}
}
\def\QCTNOpt#1{%
  \def\QCTOptA{#1}
  \let\QCTOptB\empty
}
\def\Qct{%
  \@ifnextchar[{%
    \QCTOpt}{\QCTNOpt}
}
\def\QCBOpt[#1]#2{%
  \def\QCBOptB{#1}
  \def\QCBOptA{#2}
}
\def\QCBNOpt#1{%
  \def\QCBOptA{#1}
  \let\QCBOptB\empty
}
\def\Qcb{%
  \@ifnextchar[{%
    \QCBOpt}{\QCBNOpt}
}
\def\PrepCapArgs{%
  \ifx\QCBOptA\empty
    \ifx\QCTOptA\empty
      {}%
    \else
      \ifx\QCTOptB\empty
        {\QCTOptA}%
      \else
        [\QCTOptB]{\QCTOptA}%
      \fi
    \fi
  \else
    \ifx\QCBOptA\empty
      {}%
    \else
      \ifx\QCBOptB\empty
        {\QCBOptA}%
      \else
        [\QCBOptB]{\QCBOptA}%
      \fi
    \fi
  \fi
}
\def\GRAPHICSPS#1{%
 \ifcase\GRAPHICSTYPE
   \special{ps: #1}%
 \or
   \special{language "PS", include "#1"}%
 \fi
}%
\def\graffile#1#2#3#4{%
    \bgroup
    \leavevmode
    \@ifundefined{bbl@deactivate}{\def~{\string~}}{\activesoff}
    \raise -#4 \BOXTHEFRAME{%
        \hbox to #2{\raise #3\hbox to #2{\null #1\hfil}}}%
    \egroup
}%
\def\draftbox#1#2#3#4{%
 \leavevmode\raise -#4 \hbox{%
  \frame{\rlap{\protect\tiny #1}\hbox to #2%
   {\vrule height#3 width\z@ depth\z@\hfil}%
  }%
 }%
}%
\newif\ifwasdraft
\def\GRAPHIC#1#2#3#4#5{%
 \ifnum\draft=\@ne\draftbox{#2}{#3}{#4}{#5}%
  \else\graffile{#1}{#3}{#4}{#5}%
  \fi
 }%
\def\addtoLaTeXparams#1{%
    \edef\LaTeXparams{\LaTeXparams #1}}%
\newif\ifBoxFrame \BoxFramefalse
\newif\ifOverFrame \OverFramefalse
\newif\ifUnderFrame \UnderFramefalse
\def\BOXTHEFRAME#1{%
   \hbox{%
      \ifBoxFrame
         \frame{#1}%
      \else
         {#1}%
      \fi
   }%
}
\def\doFRAMEparams#1{\BoxFramefalse\OverFramefalse\UnderFramefalse\readFRAMEparams#1\end}%
\def\readFRAMEparams#1{%
 \ifx#1\end%
  \let\next=\relax
  \else
  \ifx#1i\dispkind=\z@\fi
  \ifx#1d\dispkind=\@ne\fi
  \ifx#1f\dispkind=\tw@\fi
  \ifx#1t\addtoLaTeXparams{t}\fi
  \ifx#1b\addtoLaTeXparams{b}\fi
  \ifx#1p\addtoLaTeXparams{p}\fi
  \ifx#1h\addtoLaTeXparams{h}\fi
  \ifx#1X\BoxFrametrue\fi
  \ifx#1O\OverFrametrue\fi
  \ifx#1U\UnderFrametrue\fi
  \ifx#1w
    \ifnum\draft=1\wasdrafttrue\else\wasdraftfalse\fi
    \draft=\@ne
  \fi
  \let\next=\readFRAMEparams
  \fi
 \next
 }%
\def\IFRAME#1#2#3#4#5#6{%
      \bgroup
      \let\QCTOptA\empty
      \let\QCTOptB\empty
      \let\QCBOptA\empty
      \let\QCBOptB\empty
      #6%
      \parindent=0pt%
      \leftskip=0pt
      \rightskip=0pt
      \setbox0 = \hbox{\QCBOptA}%
      \@tempdima = #1\relax
      \ifOverFrame
          \typeout{This is not implemented yet}%
          \show\HELP
      \else
         \ifdim\wd0>\@tempdima
            \advance\@tempdima by \@tempdima
            \ifdim\wd0 >\@tempdima
               \textwidth=\@tempdima
               \setbox1 =\vbox{%
                  \noindent\hbox to \@tempdima{\hfill\GRAPHIC{#5}{#4}{#1}{#2}{#3}\hfill}\\%
                  \noindent\hbox to \@tempdima{\parbox[b]{\@tempdima}{\QCBOptA}}%
               }%
               \wd1=\@tempdima
            \else
               \textwidth=\wd0
               \setbox1 =\vbox{%
                 \noindent\hbox to \wd0{\hfill\GRAPHIC{#5}{#4}{#1}{#2}{#3}\hfill}\\%
                 \noindent\hbox{\QCBOptA}%
               }%
               \wd1=\wd0
            \fi
         \else
            \ifdim\wd0>0pt
              \hsize=\@tempdima
              \setbox1 =\vbox{%
                \unskip\GRAPHIC{#5}{#4}{#1}{#2}{0pt}%
                \break
                \unskip\hbox to \@tempdima{\hfill \QCBOptA\hfill}%
              }%
              \wd1=\@tempdima
           \else
              \hsize=\@tempdima
              \setbox1 =\vbox{%
                \unskip\GRAPHIC{#5}{#4}{#1}{#2}{0pt}%
              }%
              \wd1=\@tempdima
           \fi
         \fi
         \@tempdimb=\ht1
         \advance\@tempdimb by \dp1
         \advance\@tempdimb by -#2%
         \advance\@tempdimb by #3%
         \leavevmode
         \raise -\@tempdimb \hbox{\box1}%
      \fi
      \egroup%
}%
\def\DFRAME#1#2#3#4#5{%
 \begin{center}
     \let\QCTOptA\empty
     \let\QCTOptB\empty
     \let\QCBOptA\empty
     \let\QCBOptB\empty
     \ifOverFrame 
        #5\QCTOptA\par
     \fi
     \GRAPHIC{#4}{#3}{#1}{#2}{\z@}
     \ifUnderFrame 
        \nobreak\par\nobreak#5\QCBOptA
     \fi
 \end{center}%
 }%
\def\FFRAME#1#2#3#4#5#6#7{%
  \@ifundefined{floatstyle}
    {
     \begin{figure}[#1]%
    }
    {
	 \ifx#1h
      \begin{figure}[H]%
	 \else
      \begin{figure}[#1]%
	 \fi
	}
  \let\QCTOptA\empty
  \let\QCTOptB\empty
  \let\QCBOptA\empty
  \let\QCBOptB\empty
  \ifOverFrame
    #4
    \ifx\QCTOptA\empty
    \else
      \ifx\QCTOptB\empty
        \caption{\QCTOptA}%
      \else
        \caption[\QCTOptB]{\QCTOptA}%
      \fi
    \fi
    \ifUnderFrame\else
      \label{#5}%
    \fi
  \else
    \UnderFrametrue%
  \fi
  \begin{center}\GRAPHIC{#7}{#6}{#2}{#3}{\z@}\end{center}%
  \ifUnderFrame
    #4
    \ifx\QCBOptA\empty
      \caption{}%
    \else
      \ifx\QCBOptB\empty
        \caption{\QCBOptA}%
      \else
        \caption[\QCBOptB]{\QCBOptA}%
      \fi
    \fi
    \label{#5}%
  \fi
  \end{figure}%
 }%
\def\makeactives{
  \catcode`\"=\active
  \catcode`\;=\active
  \catcode`\:=\active
  \catcode`\'=\active
  \catcode`\~=\active
}
   \gdef\activesoff{%
      \def"{\string"}
      \def;{\string;}
      \def:{\string:}
      \def'{\string'}
      \def~{\string~}
    }
\def\FRAME#1#2#3#4#5#6#7#8{%
 \bgroup
 \ifnum\draft=\@ne
   \wasdrafttrue
 \else
   \wasdraftfalse%
 \fi
 \def\LaTeXparams{}%
 \dispkind=\z@
 \def\LaTeXparams{}%
 \doFRAMEparams{#1}%
 \ifnum\dispkind=\z@\IFRAME{#2}{#3}{#4}{#7}{#8}{#5}\else
  \ifnum\dispkind=\@ne\DFRAME{#2}{#3}{#7}{#8}{#5}\else
   \ifnum\dispkind=\tw@
    \edef\@tempa{\noexpand\FFRAME{\LaTeXparams}}%
    \@tempa{#2}{#3}{#5}{#6}{#7}{#8}%
    \fi
   \fi
  \fi
  \ifwasdraft\draft=1\else\draft=0\fi{}%
  \egroup
 }%
\def\TEXUX#1{"texux"}
\def\limfunc#1{\mathop{\rm #1}}%
\def\func#1{\mathop{\rm #1}\nolimits}%
\long\def\QQQ#1#2{%
     \long\expandafter\def\csname#1\endcsname{#2}}%
\long\def\QQA#1#2{}%
\def\QTR#1#2{{\csname#1\endcsname #2}}
\def\EXPAND#1[#2]#3{}%
\def\NOEXPAND#1[#2]#3{}%
\def\LaTeXparent#1{}%
\def\ChildStyles#1{}%
\def\ChildDefaults#1{}%
\def\QTagDef#1#2#3{}%
  \providecommand{\UNICODE}[2][]{}
\def\QQfnmark#1{\footnotemark}
 \def\abstract{%
  \if@twocolumn
   \section*{Abstract (Not appropriate in this style!)}%
   \else \small 
   \begin{center}{\bf Abstract\vspace{-.5em}\vspace{\z@}}\end{center}%
   \quotation 
   \fi
  }%
   \def\registered{\relax\ifmmode{}\r@gistered
                    \else$\m@th\r@gistered$\fi}%
 \def\r@gistered{^{\ooalign
  {\hfil\raise.07ex\hbox{$\scriptstyle\rm\text{R}$}\hfil\crcr
  \mathhexbox20D}}}}{}%
\newdimen\theight
\def\Column{%
 \vadjust{\setbox\z@=\hbox{\scriptsize\quad\quad tcol}%
  \theight=\ht\z@\advance\theight by \dp\z@\advance\theight by \lineskip
  \kern -\theight \vbox to \theight{%
   \rightline{\rlap{\box\z@}}%
   \vss
   }%
  }%
 }%
\def\qed{%
 \ifhmode\unskip\nobreak\fi\ifmmode\ifinner\else\hskip5\p@\fi\fi
 \hbox{\hskip5\p@\vrule width4\p@ height6\p@ depth1.5\p@\hskip\p@}%
 }%
\def\miss{\hbox{\vrule height2\p@ width 2\p@ depth\z@}}%
\def\tcol#1{{\baselineskip=6\p@ \vcenter{#1}} \Column}  %
\def\newfmtname{LaTeX2e}
  \DeclareOldFontCommand{\rm}{\normalfont\rmfamily}{\mathrm}
  \DeclareOldFontCommand{\sf}{\normalfont\sffamily}{\mathsf}
  \DeclareOldFontCommand{\tt}{\normalfont\ttfamily}{\mathtt}
  \DeclareOldFontCommand{\bf}{\normalfont\bfseries}{\mathbf}
  \DeclareOldFontCommand{\it}{\normalfont\itshape}{\mathit}
  \DeclareOldFontCommand{\sl}{\normalfont\slshape}{\@nomath\sl}
  \DeclareOldFontCommand{\sc}{\normalfont\scshape}{\@nomath\sc}
\def\alpha{{\Greekmath 010B}}%
\def\beta{{\Greekmath 010C}}%
\def\gamma{{\Greekmath 010D}}%
\def\delta{{\Greekmath 010E}}%
\def\epsilon{{\Greekmath 010F}}%
\def\zeta{{\Greekmath 0110}}%
\def\eta{{\Greekmath 0111}}%
\def\theta{{\Greekmath 0112}}%
\def\iota{{\Greekmath 0113}}%
\def\kappa{{\Greekmath 0114}}%
\def\lambda{{\Greekmath 0115}}%
\def\mu{{\Greekmath 0116}}%
\def\nu{{\Greekmath 0117}}%
\def\xi{{\Greekmath 0118}}%
\def\pi{{\Greekmath 0119}}%
\def\rho{{\Greekmath 011A}}%
\def\sigma{{\Greekmath 011B}}%
\def\tau{{\Greekmath 011C}}%
\def\upsilon{{\Greekmath 011D}}%
\def\phi{{\Greekmath 011E}}%
\def\chi{{\Greekmath 011F}}%
\def\psi{{\Greekmath 0120}}%
\def\omega{{\Greekmath 0121}}%
\def\varepsilon{{\Greekmath 0122}}%
\def\vartheta{{\Greekmath 0123}}%
\def\varpi{{\Greekmath 0124}}%
\def\varrho{{\Greekmath 0125}}%
\def\varsigma{{\Greekmath 0126}}%
\def\varphi{{\Greekmath 0127}}%
\def\nabla{{\Greekmath 0272}}
\def\FindBoldGroup{%
   {\setbox0=\hbox{$\mathbf{x\global\edef\theboldgroup{\the\mathgroup}}$}}%
}
\def\Greekmath#1#2#3#4{%
    \if@compatibility
        \ifnum\mathgroup=\symbold
           \mathchoice{\mbox{\boldmath$\displaystyle\mathchar"#1#2#3#4$}}%
                      {\mbox{\boldmath$\textstyle\mathchar"#1#2#3#4$}}%
                      {\mbox{\boldmath$\scriptstyle\mathchar"#1#2#3#4$}}%
                      {\mbox{\boldmath$\scriptscriptstyle\mathchar"#1#2#3#4$}}%
        \else
           \mathchar"#1#2#3#4%
        \fi 
    \else 
        \FindBoldGroup
        \ifnum\mathgroup=\theboldgroup 
           \mathchoice{\mbox{\boldmath$\displaystyle\mathchar"#1#2#3#4$}}%
                      {\mbox{\boldmath$\textstyle\mathchar"#1#2#3#4$}}%
                      {\mbox{\boldmath$\scriptstyle\mathchar"#1#2#3#4$}}%
                      {\mbox{\boldmath$\scriptscriptstyle\mathchar"#1#2#3#4$}}%
        \else
           \mathchar"#1#2#3#4%
        \fi     	    
	  \fi}
\newif\ifGreekBold  \GreekBoldfalse
\let\SAVEPBF=\pbf
\def\pbf{\GreekBoldtrue\SAVEPBF}%
  \newcounter{equationnumber}  
  \def\mathletters{%
     \addtocounter{equation}{1}
     \edef\@currentlabel{\theequation}%
     \setcounter{equationnumber}{\c@equation}
     \setcounter{equation}{0}%
     \edef\theequation{\@currentlabel\noexpand\alph{equation}}%
  }
    \def\BibTeX{{\rm B\kern-.05em{\sc i\kern-.025em b}\kern-.08em
                 T\kern-.1667em\lower.7ex\hbox{E}\kern-.125emX}}}{}%
\def\AmS{{\protect\usefont{OMS}{cmsy}{m}{n}%
                A\kern-.1667em\lower.5ex\hbox{M}\kern-.125emS}}}{}%
\def\@@eqncr{\let\@tempa\relax
    \ifcase\@eqcnt \def\@tempa{& & &}\or \def\@tempa{& &}%
      \else \def\@tempa{&}\fi
     \@tempa
     \if@eqnsw
        \iftag@
           \@taggnum
        \else
           \@eqnnum\stepcounter{equation}%
        \fi
     \fi
     \global\tag@false
     \global\@eqnswtrue
     \global\@eqcnt\z@\cr}
\def\TCItag{\@ifnextchar*{\@TCItagstar}{\@TCItag}}
\def\@TCItag#1{%
    \global\tag@true
    \global\def\@taggnum{(#1)}}
\def\@TCItagstar*#1{%
    \global\tag@true
    \global\def\@taggnum{#1}}
\def\dprod{\mathop{\displaystyle \prod }}%
\def\dbigcup{\mathop{\displaystyle \bigcup }}%
\let\DOTSI\relax
\def\RIfM@{\relax\ifmmode}%
\def\FN@{\futurelet\next}%
\def\iint{\DOTSI\intno@\tw@\FN@\ints@}%
\def\iiint{\DOTSI\intno@\thr@@\FN@\ints@}%
\def\iiiint{\DOTSI\intno@4 \FN@\ints@}%
\def\idotsint{\DOTSI\intno@\z@\FN@\ints@}%
\def\ints@{\findlimits@\ints@@}%
\newif\iflimtoken@
\newif\iflimits@
\def\findlimits@{\limtoken@true\ifx\next\limits\limits@true
 \else\ifx\next\nolimits\limits@false\else
 \limtoken@false\ifx\ilimits@\nolimits\limits@false\else
 \ifinner\limits@false\else\limits@true\fi\fi\fi\fi}%
\def\multint@{\int\ifnum\intno@=\z@\intdots@                          
 \else\intkern@\fi                                                    
 \ifnum\intno@>\tw@\int\intkern@\fi                                   
 \ifnum\intno@>\thr@@\int\intkern@\fi                                 
 \int}
\def\multintlimits@{\intop\ifnum\intno@=\z@\intdots@\else\intkern@\fi
 \ifnum\intno@>\tw@\intop\intkern@\fi
 \ifnum\intno@>\thr@@\intop\intkern@\fi\intop}%
\def\intic@{%
    \mathchoice{\hskip.5em}{\hskip.4em}{\hskip.4em}{\hskip.4em}}%
\def\negintic@{\mathchoice
 {\hskip-.5em}{\hskip-.4em}{\hskip-.4em}{\hskip-.4em}}%
\def\ints@@{\iflimtoken@                                              
 \def\ints@@@{\iflimits@\negintic@
   \mathop{\intic@\multintlimits@}\limits                             
  \else\multint@\nolimits\fi                                          
  \eat@}
 \else                                                                
 \def\ints@@@{\iflimits@\negintic@
  \mathop{\intic@\multintlimits@}\limits\else
  \multint@\nolimits\fi}\fi\ints@@@}%
\def\intkern@{\mathchoice{\!\!\!}{\!\!}{\!\!}{\!\!}}%
\def\plaincdots@{\mathinner{\cdotp\cdotp\cdotp}}%
\def\intdots@{\mathchoice{\plaincdots@}%
 {{\cdotp}\mkern1.5mu{\cdotp}\mkern1.5mu{\cdotp}}%
 {{\cdotp}\mkern1mu{\cdotp}\mkern1mu{\cdotp}}%
 {{\cdotp}\mkern1mu{\cdotp}\mkern1mu{\cdotp}}}%
\def\RIfM@{\relax\protect\ifmmode}
\def\text{\RIfM@\expandafter\text@\else\expandafter\mbox\fi}
\let\nfss@text\text
\def\text@#1{\mathchoice
   {\textdef@\displaystyle\f@size{#1}}%
   {\textdef@\textstyle\tf@size{\firstchoice@false #1}}%
   {\textdef@\textstyle\sf@size{\firstchoice@false #1}}%
   {\textdef@\textstyle \ssf@size{\firstchoice@false #1}}%
   \glb@settings}
\def\textdef@#1#2#3{\hbox{{%
                    \everymath{#1}%
                    \let\f@size#2\selectfont
                    #3}}}
\newif\iffirstchoice@
\def\Let@{\relax\iffalse{\fi\let\\=\cr\iffalse}\fi}%
\def\vspace@{\def\vspace##1{\crcr\noalign{\vskip##1\relax}}}%
\def\multilimits@{\bgroup\vspace@\Let@
 \baselineskip\fontdimen10 \scriptfont\tw@
 \advance\baselineskip\fontdimen12 \scriptfont\tw@
 \lineskip\thr@@\fontdimen8 \scriptfont\thr@@
 \lineskiplimit\lineskip
 \vbox\bgroup\ialign\bgroup\hfil$\m@th\scriptstyle{##}$\hfil\crcr}%
\def\Sb{_\multilimits@}%
\def\endSb{\crcr\egroup\egroup\egroup}%
\def\Sp{^\multilimits@}%
\newdimen\ex@
\def\rightarrowfill@#1{$#1\m@th\mathord-\mkern-6mu\cleaders
 \hbox{$#1\mkern-2mu\mathord-\mkern-2mu$}\hfill
 \mkern-6mu\mathord\rightarrow$}%
\def\leftarrowfill@#1{$#1\m@th\mathord\leftarrow\mkern-6mu\cleaders
 \hbox{$#1\mkern-2mu\mathord-\mkern-2mu$}\hfill\mkern-6mu\mathord-$}%
\def\leftrightarrowfill@#1{$#1\m@th\mathord\leftarrow
\mkern-6mu\cleaders
 \hbox{$#1\mkern-2mu\mathord-\mkern-2mu$}\hfill
 \mkern-6mu\mathord\rightarrow$}%
\def\overrightarrow{\mathpalette\overrightarrow@}%
\def\overrightarrow@#1#2{\vbox{\ialign{##\crcr\rightarrowfill@#1\crcr
 \noalign{\kern-\ex@\nointerlineskip}$\m@th\hfil#1#2\hfil$\crcr}}}%
\def\overleftarrow{\mathpalette\overleftarrow@}%
\def\overleftarrow@#1#2{\vbox{\ialign{##\crcr\leftarrowfill@#1\crcr
 \noalign{\kern-\ex@\nointerlineskip}$\m@th\hfil#1#2\hfil$\crcr}}}%
\def\overleftrightarrow{\mathpalette\overleftrightarrow@}%
\def\overleftrightarrow@#1#2{\vbox{\ialign{##\crcr
   \leftrightarrowfill@#1\crcr
 \noalign{\kern-\ex@\nointerlineskip}$\m@th\hfil#1#2\hfil$\crcr}}}%
\def\underrightarrow{\mathpalette\underrightarrow@}%
\def\underrightarrow@#1#2{\vtop{\ialign{##\crcr$\m@th\hfil#1#2\hfil
  $\crcr\noalign{\nointerlineskip}\rightarrowfill@#1\crcr}}}%
\def\underleftarrow{\mathpalette\underleftarrow@}%
\def\underleftarrow@#1#2{\vtop{\ialign{##\crcr$\m@th\hfil#1#2\hfil
  $\crcr\noalign{\nointerlineskip}\leftarrowfill@#1\crcr}}}%
\def\underleftrightarrow{\mathpalette\underleftrightarrow@}%
\def\underleftrightarrow@#1#2{\vtop{\ialign{##\crcr$\m@th
  \hfil#1#2\hfil$\crcr
 \noalign{\nointerlineskip}\leftrightarrowfill@#1\crcr}}}%
\def\qopnamewl@#1{\mathop{\operator@font#1}\nlimits@}
\let\nlimits@\displaylimits
\def\setboxz@h{\setbox\z@\hbox}
\def\varlim@#1#2{\mathop{\vtop{\ialign{##\crcr
 \hfil$#1\m@th\operator@font lim$\hfil\crcr
 \noalign{\nointerlineskip}#2#1\crcr
 \noalign{\nointerlineskip\kern-\ex@}\crcr}}}}
 \def\rightarrowfill@#1{\m@th\setboxz@h{$#1-$}\ht\z@\z@
  $#1\copy\z@\mkern-6mu\cleaders
  \hbox{$#1\mkern-2mu\box\z@\mkern-2mu$}\hfill
  \mkern-6mu\mathord\rightarrow$}
\def\leftarrowfill@#1{\m@th\setboxz@h{$#1-$}\ht\z@\z@
  $#1\mathord\leftarrow\mkern-6mu\cleaders
  \hbox{$#1\mkern-2mu\copy\z@\mkern-2mu$}\hfill
  \mkern-6mu\box\z@$}
\def\projlim{\qopnamewl@{proj\,lim}}
\def\injlim{\qopnamewl@{inj\,lim}}
\def\varinjlim{\mathpalette\varlim@\rightarrowfill@}
\def\varprojlim{\mathpalette\varlim@\leftarrowfill@}
\def\varliminf{\mathpalette\varliminf@{}}
\def\varliminf@#1{\mathop{\underline{\vrule\@depth.2\ex@\@width\z@
   \hbox{$#1\m@th\operator@font lim$}}}}
\def\varlimsup{\mathpalette\varlimsup@{}}
\def\varlimsup@#1{\mathop{\overline
  {\hbox{$#1\m@th\operator@font lim$}}}}
\def\align{\@verbatim \frenchspacing\@vobeyspaces \@alignverbatim
You are using the "align" environment in a style in which it is not defined.}
\let\csname endalign*\endcsname =\endtrivlist
\def\alignat{\@verbatim \frenchspacing\@vobeyspaces \@alignatverbatim
You are using the "alignat" environment in a style in which it is not defined.}
\let\csname endalignat*\endcsname =\endtrivlist
\def\xalignat{\@verbatim \frenchspacing\@vobeyspaces \@xalignatverbatim
You are using the "xalignat" environment in a style in which it is not defined.}
\let\csname endxalignat*\endcsname =\endtrivlist
\def\gather{\@verbatim \frenchspacing\@vobeyspaces \@gatherverbatim
You are using the "gather" environment in a style in which it is not defined.}
\let\csname endgather*\endcsname =\endtrivlist
\def\multiline{\@verbatim \frenchspacing\@vobeyspaces \@multilineverbatim
You are using the "multiline" environment in a style in which it is not defined.}
\let\csname endmultiline*\endcsname =\endtrivlist
\def\arrax{\@verbatim \frenchspacing\@vobeyspaces \@arraxverbatim
You are using a type of "array" construct that is only allowed in AmS-LaTeX.}
\def\tabulax{\@verbatim \frenchspacing\@vobeyspaces \@tabulaxverbatim
You are using a type of "tabular" construct that is only allowed in AmS-LaTeX.}
\let\csname endarrax*\endcsname =\endtrivlist
\let\csname endtabulax*\endcsname =\endtrivlist
 \def\endequation{%
     \ifmmode\ifinner 
      \iftag@
        \addtocounter{equation}{-1} 
        $\hfil
           \displaywidth\linewidth\@taggnum\egroup \endtrivlist
        \global\tag@false
        \global\@ignoretrue   
      \else
        $\hfil
           \displaywidth\linewidth\@eqnnum\egroup \endtrivlist
        \global\tag@false
        \global\@ignoretrue 
      \fi
     \else   
      \iftag@
        \addtocounter{equation}{-1} 
        \eqno \hbox{\@taggnum}
        \global\tag@false%
        $$\global\@ignoretrue
      \else
        \eqno \hbox{\@eqnnum}
        $$\global\@ignoretrue
      \fi
     \fi\fi
 } 
 \newif\iftag@ \tag@false
 \def\TCItag{\@ifnextchar*{\@TCItagstar}{\@TCItag}}
 \def\@TCItag#1{%
     \global\tag@true
     \global\def\@taggnum{(#1)}}
 \def\@TCItagstar*#1{%
     \global\tag@true
     \global\def\@taggnum{#1}}
     \def\tag{\@ifnextchar*{\@tagstar}{\@tag}}
     \def\@tag#1{%
         \global\tag@true
         \global\def\@taggnum{(#1)}}
     \def\@tagstar*#1{%
         \global\tag@true
         \global\def\@taggnum{#1}}
\begin{document}
	\title{Continuity of weak solutions to rough infinitely
		degenerate equations}
	\author{Lyudmila Korobenko}
	\address{Reed College\\
		Portland, Oregon, USA}
	\email{korobenko@reed.edu}
	\author{Cristian Rios}
	\address{University of Calgary\\
		Calgary, Alberta, Canada}
	\email{crios@ucalgary.ca}
	\author{Eric Sawyer}
	\address{McMaster University\\
		Hamilton, Ontario, Canada}
	\email{sawyer@mcmaster.ca}
	\author{Ruipeng Shen}
	\address{Center for Applied Mathematics\\
		Tianjin University \\
		Tianjin 300072, P.R.China}
	\email{srpgow@163.com}
	\date{\today }
	
	\begin{abstract}
		We obtain a generalization of the DeGiorgi Lemma to the infinitely
		degenerate regime and apply it to obtain continuity of weak
		solutions to certain infinitely degenerate equations. This reproduces the continuity result obtained in \cite{KoRiSaSh1} via Moser iteration, but only for homogeneous equations. However, the proofs are much less technical and more transparent.
	\end{abstract}
	
	\maketitle
	\tableofcontents

\section{Introduction}

In \cite{KoRiSaSh2}, building on work\ from \cite{KoRiSaSh1}, local
boundedness was established for weak subsolutions to certain infinitely
degenerate elliptic divergence form equations, motivated by the pioneering
work of Fedii \cite{Fe}, Kusuoka and Strook \cite{KuStr}, Morimoto \cite{Mor}
and Christ \cite{Chr}. The main theorem on local boundedness in \cite%
{KoRiSaSh2} included this.

\begin{theorem}[\protect\cite{KoRiSaSh2}]
	\label{Local}Suppose that $D\subset \mathbb{R}^{n}$ is a domain in $\mathbb{R%
	}^{n}$ with $n\geq 3$ and that 
	\begin{equation*}
	\mathcal{L}u\equiv \func{div}\mathcal{A}\left( x,u\right) \nabla u,\ \ \ \ \
	x=\left( x_{1},...,x_{n}\right) \in D,
	\end{equation*}%
	where$\ \mathcal{A}\left( x,z\right) \sim \left[ 
	\begin{array}{cc}
	I_{n-1} & 0 \\ 
	0 & f\left( x_{1}\right) ^{2}%
	\end{array}%
	\right] $, $I_{n-1}$ is the $\left( n-1\right) \times \left( n-1\right) $
	identity matrix, $\mathcal{A}$ has bounded measurable components, and the
	geometry $F=-\ln f$ satisfies the structure conditions in Definition \ref%
	{structure conditions} below.
	
	\begin{enumerate}
		\item If $F\leq D_{\sigma }$ for some $0<\sigma <1$, then every weak
		solution to $\mathcal{L}u=\phi $ with $A$-admissible $\phi $ is locally
		bounded in $D$.
		
		\item Conversely, if $n\geq 3$ and $\sigma >1$, then there exists an
		unbounded weak solution $u$ in a neighbourhood of the origin in $\mathbb{R}%
		^{n}$ to the equation $Lu=0$ with geometry $F=D_{\sigma }$.
	\end{enumerate}
\end{theorem}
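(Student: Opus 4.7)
\bigskip

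\noindent\textbf{Proof proposal.} The plan is to prove the two parts of Theorem \ref{Local} by fundamentally different strategies: the positive result (1) by an iteration argument in the spirit of Moser or De Giorgi, adapted to the infinitely degenerate geometry $F=-\ln f$, and the negative result (2) by exhibiting an explicit unbounded weak solution when the geometry is too flat. Since both parts hinge on a sharp comparison between the degeneracy profile $f(x_{1})$ and the threshold function $D_{\sigma}$, the overall work will be to show that the iteration closes precisely when $\sigma<1$ and fails in a constructive way when $\sigma>1$.

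For Part (1), I would first derive a Caccioppoli inequality for powers $u^{p}$ of a nonnegative subsolution, using test functions of the form $\eta^{2}u^{p}$ with cutoffs $\eta$ adapted to the anisotropic geometry (standard in the $x'=(x_{1},\dots,x_{n-1})$ directions and of scale controlled by $f(x_{1})$ in the $x_{n}$ direction). The energy produced on the left is a degenerate Dirichlet form $\int\eta^{2}\bigl(|\nabla_{x'}u^{p/2}|^{2}+f(x_{1})^{2}|\partial_{n}u^{p/2}|^{2}\bigr)\,dx$. The second step is to couple this with an Orlicz--Sobolev inequality compatible with the subunit metric generated by $\mathcal{A}$; the gain should be of sub-polynomial, Orlicz type, and the assumption $F\leq D_{\sigma}$ with $\sigma<1$ is precisely what forces the Young function governing the gain to satisfy the summability condition needed for an infinite iteration to terminate at an $L^{\infty}$ bound rather than diverge. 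The third step is the Moser (or De Giorgi) iteration itself: starting from a weak $L^{2}$ bound on a ball, one iterates over shrinking concentric regions and gets, after summing a convergent series indexed by the iteration level, a pointwise bound $\|u\|_{L^{\infty}(B_{1/2})}\lesssim\|u\|_{L^{2}(B_{1})}+\|\phi\|_{A\text{-admissible}}$. The $A$-admissibility of $\phi$ should be designed exactly so that its contribution to the iteration is controlled by an analogous Orlicz norm.

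For Part (2), the strategy is to exploit the product structure of $\mathcal{A}$. I would search for a separable solution of the form $u(x)=v(x_{1})w(x_{n})$ (with $n\geq 3$ making the transverse slice nontrivial), which reduces $\mathcal{L}u=0$ to an ODE system coupling $v$ and $w$. When $F=D_{\sigma}$ with $\sigma>1$, the transverse conductivity $f(x_{1})^{2}=e^{-2D_{\sigma}(x_{1})}$ decays so rapidly as $x_{1}\to 0$ that, following Fedii--Kusuoka--Strook--Morimoto--Christ, one can construct a harmonic-like profile $w$ on a slice and pair it with a $v$ blowing up near $x_{1}=0$, while still satisfying the equation weakly across the degenerate hyperplane. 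Verifying that the candidate $u$ is a genuine weak solution (finite Dirichlet energy, integrability of $\mathcal{A}\nabla u$) and simultaneously unbounded is the content of the construction.

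The main obstacle I anticipate is in Part (1): formulating and proving the correct Orlicz--Sobolev inequality whose Young function is calibrated to the doubly-exponential geometry $f=e^{-F}$ with $F\leq D_{\sigma}$, and then matching the resulting gain against the loss incurred by the Caccioppoli step so that the iteration indeed closes at every scale down to $0$. The threshold $\sigma=1$ should emerge as the Dini-type integrability boundary for $1/D_{\sigma}$, and the companion construction in Part (2) must exploit exactly the failure of that integrability for $\sigma>1$, so the two parts together pin down a sharp dichotomy.
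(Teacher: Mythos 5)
This theorem is not proved in the present paper at all: it is imported verbatim from \cite{KoRiSaSh2} (and ultimately \cite{KoRiSaSh1}), so there is no in-paper proof to compare against. Your outline does track the strategy of those references at the level of headlines --- a Caccioppoli inequality for powers of a subsolution, an Orlicz--Sobolev inequality adapted to the subunit metric of $M_{F}$, and a Moser/De~Giorgi iteration whose convergence is governed by the degeneracy $F$ --- but as written it is a plan, not a proof. The two load-bearing steps are both deferred. First, you never produce the Orlicz--Sobolev inequality: you do not identify the Young function, do not prove the inequality for the control balls of $M_{F}$ (which requires the subrepresentation formula and the ball-volume asymptotics of Lemma \ref{new}, i.e.\ the quantities $h^{\ast}$, $\widehat{d}$, and $|B(x,r)|\approx f(x_{1}+r)|F'(x_{1}+r)|^{-n}(r|F'(x_{1}+r)|)^{n/2-1}$ in the large-radius regime), and you do not carry out the computation showing that the resulting iteration closes exactly when $F\leq D_{\sigma}$ with $\sigma<1$. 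Asserting that ``the threshold $\sigma=1$ should emerge as the Dini-type integrability boundary'' is a restatement of the theorem, not an argument; the actual mechanism in the references is quantitative (the superradius/iteration constant $A_{N}(r)\approx\exp\{C(\varphi(r)/r)^{1/N}\}$ of Corollary \ref{renorm} must stay finite as the scales shrink), and nothing in your sketch derives it.

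Second, the separable ansatz $u(x)=v(x_{1})w(x_{n})$ for Part (2) is too naive and does not explain the hypothesis $n\geq 3$. In the known construction the unbounded solution is singular in the $n-1$ \emph{nondegenerate} variables (schematically $u\sim x_{n}\,|x'|^{-\alpha}\,g(x_{1})$-type profiles), and the finiteness of the degenerate Dirichlet energy $\int\nabla u^{\mathrm{tr}}A\nabla u$ forces there to be at least two elliptic directions --- this is precisely where $n\geq 3$ enters. A product of one function of $x_{1}$ with one function of $x_{n}$ has no mechanism for producing a locally unbounded function of finite energy that is nevertheless a weak solution, and it makes the dimension restriction mysterious rather than necessary. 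To make Part (2) rigorous you would need to write down the explicit candidate, verify $u\in W_{A}^{1,2}$ near the origin, verify the weak formulation against Lipschitz test functions across the degenerate hyperplane $\{x_{1}=0\}$, and check that unboundedness occurs exactly when $\sigma>1$; none of these verifications appear in the proposal.
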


Where geometry $D_{\sigma}$ is defined as $D_{\sigma}(x)\equiv\left( \frac{1}{\left\vert x\right\vert }%
\right) ^{\sigma }$, $x>0$.

The purpose of this paper is to improve the local boundedness conclusion in
part (1) of Theorem \ref{Local} to include continuity. 
For the geometric continuity theorem we need to consider a less degenerate
family of geometries. For $k\geq 0$ and $0<\sigma <\infty $, define $%
F_{k,\sigma }\left( r\right) =\left( \ln \frac{1}{r}\right) \left( \ln
^{\left( k\right) }\frac{1}{r}\right) ^{\sigma }$ and $f_{k,\sigma }\left(
r\right) =e^{-F_{k,\sigma }\left( r\right) }=r^{\left( \ln ^{\left( k\right)
	}\frac{1}{r}\right) ^{\sigma }}$. Note that $F_{0,\sigma }\left( r\right)
=\left( \ln \frac{1}{r}\right) \frac{1}{r^{\sigma }}$ and $F_{\sigma }\left(
r\right) =\frac{1}{r^{\sigma }}$ for $0<\sigma <\infty $ are essentially the
same families of geometries.

\begin{theorem}\label{Holder thm}
	Suppose that $\Omega \subset \mathbb{R}^{n}$ is a domain in $\mathbb{R}^{n}$
	with $n\geq 2$ and that 
	\begin{equation*}
	\mathcal{L}u\equiv \func{div}\mathcal{A}\left( x,u\right) \nabla u,\ \ \ \ \
	x=\left( x_{1},...,x_{n}\right) \in \Omega ,
	\end{equation*}%
	where$\ \mathcal{A}\left( x,z\right) \sim \left[ 
	\begin{array}{cc}
	I_{n-1} & 0 \\ 
	0 & f\left( x_{1}\right) ^{2}%
	\end{array}%
	\right] $, $I_{n-1}$ is the $\left( n-1\right) \times \left( n-1\right) $
	identity matrix, $\mathcal{A}$ has bounded measurable components, and the
	geometry $F=-\ln f$ satisfies the structure conditions in Definition \ref%
	{structure conditions}.
	
	\begin{enumerate}
		\item If $F\leq F_{3,\sigma }$ for some $0<\sigma <1$, then every weak
		solution to $\mathcal{L}u=0$ is continuous in $\Omega $.
		
		\item On the other hand, if $n\geq 3$ and $\sigma \geq 1$, then there exists
		a locally unbounded weak solution $u$ in a neighbourhood of the origin in $%
		\mathbb{R}^{n}$ to the equation $Lu=0$ with geometry $F=F_{0,\sigma }$.
	\end{enumerate}
\end{theorem}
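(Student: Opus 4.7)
The plan splits Theorem~\ref{Holder thm} into its positive direction (continuity under $F\le F_{3,\sigma}$, $\sigma<1$) and its sharpness direction (unbounded solution when $F=F_{0,\sigma}$, $\sigma\ge 1$), and treats them separately. The positive direction follows the DeGiorgi programme. Local boundedness is essentially free here: since $F_{3,\sigma}(r)\le D_{\tau}(r)$ for any $\tau\in(\sigma,1)$ on small balls, part~(1) of Theorem~\ref{Local} (with $\phi=0$) shows that weak solutions to $\mathcal Lu=0$ are locally bounded. What I would add is a quantitative oscillation-decay step.

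The core is an infinitely degenerate \emph{first DeGiorgi lemma}: if $u$ is a bounded weak subsolution with $u\le1$ on a subelliptic ball $B(x_0,r)$ and if $\{u\ge 0\}$ occupies only a sufficiently small fraction of $B(x_0,r)$, then $u\le\tfrac12$ on $B(x_0,r/2)$. The ingredients would be (a) a Caccioppoli inequality in the degenerate energy $\int|\nabla_{\!\mathcal A}u|^2$, (b) an Orlicz--Sobolev embedding at the scale of $F_{3,\sigma}$, playing the role of the classical $L^{p^\ast}$ Sobolev inequality, and (c) a DeGiorgi iteration over level sets of $u$. Coupled with a \emph{second DeGiorgi lemma} — a Poincar\'e/isoperimetric estimate converting the coexistence of two substantial level sets into quantitative measure decay — one obtains
\begin{equation*}
\operatorname{osc}(u,B(x_0,r/2))\le(1-\theta(r))\,\operatorname{osc}(u,B(x_0,r)),
\end{equation*}
with $\theta(r)$ controlled by the Orlicz gain at scale $r$. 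Iterating along $r_j=2^{-j}r_0$ produces a modulus of continuity $\omega(r)\sim\prod_j(1-\theta(r_j))$, and the hypothesis $\sigma<1$ at depth $k=3$ is precisely what makes $\sum_j\theta(r_j)=+\infty$, forcing $\omega(r)\to0$ and giving continuity (non-H\"older) at each interior point. The \textbf{main obstacle} I foresee is step~(b) together with the bookkeeping across the iteration: one must establish a sharp enough Orlicz--Sobolev inequality so that the logarithmic loss incurred at each DeGiorgi level is absorbed, and show that three iterated logs leave exactly enough slack for the total gain to be summable when $\sigma<1$ while failing at $\sigma\ge1$ — thereby matching part~(2) of the theorem.

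For part~(2) I would adapt the explicit construction of part~(2) of Theorem~\ref{Local}. With $F=F_{0,\sigma}$, $\sigma\ge1$, and $n\ge 3$, a separation of variables $u(x)=v(x_1)w(x')$, where $x'=(x_2,\dots,x_n)$, reduces $\mathcal Lu=0$ to a one-dimensional ODE in $x_1$ involving $f(x_1)^2=e^{-2F_{0,\sigma}(x_1)}$ together with a transverse eigenvalue problem on a ball in $\mathbb R^{n-1}$. The extra transverse directions available when $n\ge 3$ guarantee that the product has finite degenerate energy near the origin, while the $x_1$-factor, built from integrals of $e^{2F_{0,\sigma}}$, diverges at $x_1=0$ once $\sigma\ge 1$. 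This is the Fedii/Kusuoka--Strook/Christ template transported to the scale $F_{0,\sigma}$ and yields the required locally unbounded weak solution, establishing the sharpness of the threshold in part~(1).
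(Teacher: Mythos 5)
Your overall architecture for part (1) — quantitative oscillation decay via DeGiorgi, iterated along dyadic scales, with continuity following from divergence of the sum of the per-scale gains, and with $F_{3,\sigma}$, $\sigma<1$ entering exactly through that summability — matches the paper's strategy. But the key technical mechanism is different, and your version leaves the hardest step open where the paper deliberately routes around it. You propose to build a ``first DeGiorgi lemma'' from scratch out of Caccioppoli plus an Orlicz--Sobolev embedding adapted to $F_{3,\sigma}$, and you correctly flag that embedding and its bookkeeping as the main obstacle; the paper never proves (or uses) such an embedding here. Instead it imports the \emph{quantitative} local boundedness inequality of \cite{KoRiSaSh2} (Corollary \ref{renorm}), with its explicit constant $A_{N}(3r)$, and sets $\frac{1}{2\sqrt{\delta(r)}}=A_N(3r)$; the role of your first lemma is then played by running a measure-decay iteration on the truncations $w_k=2^k(v-(1-2^{-k}))$ only until $\int_{B_r}(w_{k_0+1})_+^2<\delta(r)|B_{3r}|$, at which point (\ref{LB'}) immediately yields $w_{k_0+1}\le\frac12$ on $B_{r/2}$. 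The measure-decay step itself (your ``second lemma'') is Lemma \ref{DeG Lemma}, proved from a proportional vanishing $L^1$-Sobolev inequality that follows from the $(1,1)$-Poincar\'e inequality — the paper is explicit that this avoids any isoperimetric inequality, so you should drop ``isoperimetric'' from your description. The payoff of the paper's division of labour is that all the Orlicz--Sobolev machinery is encapsulated in the constant $A_N(3r)$, and the three iterated logarithms with $\sigma<1$ are needed precisely so that $\lambda(r)=2^{-(3+C_3/\delta(r)^2)}$, which is doubly exponentially small in the degeneracy, still satisfies $\sum_j\lambda(r_j)=\infty$. Also note that qualitative local boundedness via $F_{3,\sigma}\le D_\tau$ is not enough for this argument; it is the rate of blow-up of $A_N(3r)$ as $r\to0$ that carries the whole proof. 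For part (2), the paper gives no new construction (it defers to the counterexamples of the earlier papers), so your Fedi\u{\i}-type separation-of-variables sketch is consistent with what is actually done, though like the paper it is not carried out here.

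If you want your write-up to constitute a complete proof along your own lines, you must either prove the Orlicz--Sobolev inequality at scale $F_{3,\sigma}$ and run the full level-set iteration (essentially re-deriving \cite{KoRiSaSh2}), or do what the paper does and cite the quantitative local boundedness with its explicit constant; as it stands, the acknowledged obstacle in step (b) is a genuine gap in your argument.
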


\subsection{Preliminaries and definitions}

We recall some of the terminology and definitions from \cite{KoRiSaSh1} and 
\cite{KoRiSaSh2} that we use here. Let $A\left( x\right) $ be a nonnegative
semidefinite $n\times n$ matrix valued function in a bounded domain $\Omega
\subset \mathbb{R}^{n}$. We consider the second order \emph{special}
quasilinear equation (`special' because only $u$, and not $\nabla u$,
appears nonlinearly),%
\begin{equation*}
\mathcal{L}u\equiv \nabla ^{\limfunc{tr}}\mathcal{A}\left( x,u\left(
x\right) \right) \nabla u=\phi ,\ \ \ \ \ x\in \Omega ,
\end{equation*}%
and we assume the following quadratic form condition on the quasilinear
matrix $\mathcal{A}(x,u(x))$, 
\begin{equation}
k\,\xi ^{T}A(x)\xi \leq \xi ^{T}\mathcal{A}(x,z)\xi \leq K\,\xi ^{T}A(x)\xi
\ ,  \label{struc_0}
\end{equation}%
for a.e. $x\in \Omega $ and all $z\in \mathbb{R}$, $\xi \in \mathbb{R}^{n}$.
Here $k,K$ are positive constants and we assume that $A(x)=B\left( x\right)
^{\limfunc{tr}}B\left( x\right) $ where $B\left( x\right) $ is a Lipschitz
continuous $n\times n$ real-valued matrix defined for $x\in \Omega $. We
also consider the linear equation%
\begin{equation*}
Lu\equiv \nabla ^{\limfunc{tr}}A\left( x\right) \nabla u=\phi ,\ \ \ \ \
x\in \Omega ,
\end{equation*}%
and define the $A$-gradient by%
\begin{equation}
\nabla _{A}=B\left( x\right) \nabla \ .  \label{def A grad}
\end{equation}

\begin{definition}
	The degenerate Sobolev space $W_{A}^{1,2}\left( \Omega \right) $ is normed by%
	\begin{equation*}
	\left\Vert v\right\Vert _{W_{A}^{1,2}}\equiv \sqrt{\int_{\Omega }\left(
		\left\vert v\right\vert ^{2}+\nabla v^{\func{tr}}A\nabla v\right) }=\sqrt{%
		\int_{\Omega }\left( \left\vert v\right\vert ^{2}+\left\vert \nabla
		_{A}v\right\vert ^{2}\right) }.
	\end{equation*}
\end{definition}

\begin{definition}
	Let $\Omega $ be a bounded domain in $\mathbb{R}^{n}$. Assume that $\phi \in
	L_{\limfunc{loc}}^{2}\left( \Omega \right) $. We say that $u\in
	W_{A}^{1,2}\left( \Omega \right) $ is a \emph{weak solution} to $\mathcal{L}%
	u=\phi $ provided%
	\begin{equation*}
	-\int_{\Omega }\nabla w\left( x\right) ^{\limfunc{tr}}\mathcal{A}\left(
	x,u(x)\right) \nabla u=\int_{\Omega }\phi w
	\end{equation*}%
	for all $w\in \left( W_{A}^{1,2}\right) _{0}\left( \Omega \right) $, where $%
	\left( W_{A}^{1,2}\right) _{0}\left( \Omega \right) $ denotes the closure in 
	$W_{A}^{1,2}\left( \Omega \right) $ of the subspace of Lipschitz continuous
	functions with compact support in $\Omega $.
\end{definition}

Note that our quadratic form condition (\ref{struc_0}) implies that the
integral on the left above is absolutely convergent, and our assumption that 
$\phi \in L_{\limfunc{loc}}^{2}\left( \Omega \right) $ implies that the
integral on the right above is absolutely convergent. Weak sub and super
solutions are defined by replacing $=$ with $\geq $ and $\leq $ respectively
in the display above.

Given a geometry $F=-\ln f$, we define the balls $B$ to be the control balls
associated with the $n\times n$ matrix $M_{F}\left( x\right) =\left[ 
\begin{array}{cc}
I_{n-1} & 0 \\ 
0 & f\left( x_{1}\right) ^{2}%
\end{array}%
\right] $. Assuming the structure conditions in Definition \ref{structure
	conditions} below, we recall from \cite{KoRiSaSh1} that the Lebesgue measure
of the \emph{two} dimensional ball $B_{2D}\left( x,r\right) $ centered at $%
x\in \mathbb{R}^{2}$ with radius $r>0$ satisfies%
\begin{equation}
\left\vert B_{2D}\left( x,r\right) \right\vert \approx \left\{ 
\begin{array}{ccc}
r^{2}f(x_{1}) & \text{ if } & r\leq \frac{1}{\left\vert F^{\prime }\left(
	x_{1}\right) \right\vert } \\ 
\frac{f\left( x_{1}+r\right) }{\left\vert F^{\prime }\left( x_{1}+r\right)
	\right\vert ^{2}} & \text{ if } & r\geq \frac{1}{\left\vert F^{\prime
	}\left( x_{1}\right) \right\vert }%
\end{array}%
\right. .  \label{measure}
\end{equation}

\begin{definition}
	\label{def A admiss new}Let $\Omega $ be a bounded domain in $\mathbb{R}^{n}$
	and let $A\left( x\right) $ be a nonnegative semidefinite $n\times n$ matrix
	valued function as above. Fix $x\in \Omega $ and $\rho >0$. We say $\phi $
	is $A$\emph{-admissible} at $\left( x,\rho \right) $ if 
	\begin{equation*}
	\Vert \phi \Vert _{X\left( B\left( x,\rho \right) \right) }\equiv \sup_{v\in
		\left( W_{A}^{1,1}\right) _{0}(B\left( x,\rho \right) )}\frac{\int_{B\left(
			x,\rho \right) }\left\vert v\phi \right\vert \,dy}{\int_{B\left( x,\rho
			\right) }\Vert \nabla _{A}v\Vert \,dy}<\infty .
	\end{equation*}
\end{definition}

\begin{definition}[structure conditions]
	\label{structure conditions}We refer to the following five conditions on $%
	F:\left( 0,\infty \right) \rightarrow \mathbb{R}$ as \emph{structure
		conditions}:
	
	\begin{enumerate}
		\item $\lim_{x\rightarrow 0^{+}}F\left( x\right) =+\infty $;
		
		\item $F^{\prime }\left( x\right) <0$ and $F^{\prime \prime }\left( x\right)
		>0$ for all $x\in (0,R)$;
		
		\item $\frac{1}{C}\left\vert F^{\prime }\left( r\right) \right\vert \leq
		\left\vert F^{\prime }\left( x\right) \right\vert \leq C\left\vert F^{\prime
		}\left( r\right) \right\vert $ for $\frac{1}{2}r<x<2r<R$;
		
		\item $\frac{1}{-xF^{\prime }\left( x\right) }$ is increasing in the
		interval $\left( 0,R\right) $ and satisfies $\frac{1}{-xF^{\prime }\left(
			x\right) }\leq \frac{1}{\varepsilon }\,$for $x\in (0,R)$;
		
		\item $\frac{F^{\prime \prime }\left( x\right) }{-F^{\prime }\left( x\right) 
		}\approx \frac{1}{x}$ for $x\in (0,R)$.
	\end{enumerate}
\end{definition}

\begin{remark}
	We make no smoothness assumption on $f$ other than the existence of the
	second derivative $f^{\prime \prime }$ on the open interval $(0,R)$. Note
	also that at one extreme, $f$ can be of finite type, namely $f\left(
	x\right) =x^{\alpha }$ for any $\alpha >0$, and at the other extreme, $f$
	can be of strongly degenerate type, namely $f\left( x\right) =e^{-\frac{1}{%
			x^{\alpha }}}$ for any $\alpha >0$. Assumption (1) rules out the elliptic
	case $f\left( 0\right) >0$.
\end{remark}

\begin{notation}
	We refer to a function $F$ satisfying the structure conditions in Definition %
	\ref{structure conditions} as a `geometry' since $F=-\ln f$ then specifies
	the nonnegative semidefinite matrix $M_{F}=\left[ 
	\begin{array}{cc}
	I_{n-1} & 0 \\ 
	0 & f\left( x_{1}\right) ^{2}%
	\end{array}%
	\right] $ and hence the geometry of the associated control balls. The class
	of degenerate elliptic linear operators%
	\begin{equation*}
	Lu=\func{div}A\left( x\right) \nabla u,\ \ \ \ \ A\left( x\right) \sim
	M_{F}\left( x_{1}\right) ,
	\end{equation*}%
	is also specified along with the associated class of quasilinear operators%
	\begin{equation*}
	\mathcal{L}u=\func{div}\mathcal{A}\nabla u,\ \ \ \ \ \mathcal{A}\left(
	x,z\right) \sim M_{F}\left( x_{1}\right) .
	\end{equation*}
\end{notation}

\subsection{Control balls}

We now recall further notation from \cite{KoRiSaSh1} and \cite{KoRiSaSh2},
beginning with the case of $n=2$ dimensions. Let $d\left( x,y\right) $ be
the control metric on an open subset $\Omega $ of the plane $\mathbb{R}^{2}$
that is associated with the matrix $A$, and refer to the associated balls as
control balls, subunit balls, or $A$-balls. Now we recall the definition of
\textquotedblleft height\textquotedblright\ of an arbitrary $A$-ball. Let $%
X=(x_{1},0)$ be a point on the positive $x$-axis and let $r$ be a positive
real number. Let the upper half of the boundary of the ball $B(X,r)$ be
given as the graph of the function $\varphi \left( x\right) $, $%
x_{1}-r<x<x_{1}+r$. Denote by $\beta _{X,P}$ the geodesic that meets the
boundary of the ball $B(X,r)$ at the point $P=(x_{1}+r^{\ast },h)$ where $%
\beta _{X,P}$ has a vertical tangent at $P$, $r^{\ast }=r^{\ast }\left(
x_{1},r\right) $ and $h=h\left( x_{1},r\right) =\varphi \left( x_{1}+r^{\ast
}\right) $. Here both $r^{\ast }$ and $h$ are \emph{functions} of the two
independent variables $x_{1}$ and $r$, but we will often write $r^{\ast
}=r^{\ast }\left( x_{1},r\right) $ and $h=h\left( x_{1},r\right) $ for
convenience. We refer to $h=h\left( x_{1},r\right) $ as the \emph{height} of
the ball $B((x_{1},0),r)$. In \cite{KoRiSaSh1} the authors proved the
following estimates on the height.

\begin{proposition}
	\label{height}Let $\beta _{X,P}$, $r^{\ast }$ and $h$ be defined as above.
	Define $\lambda \left( x\right) $ implicitly by%
	\begin{equation*}
	r=\int_{x_{1}}^{x}\frac{\lambda \left( x\right) }{\sqrt{\lambda \left(
			x\right) ^{2}-f\left( u\right) ^{2}}}du.
	\end{equation*}%
	Then
	
	\begin{enumerate}
		\item For $x_{1}-r<x<x_{1}+r$ we have $\varphi \left( x\right) \leq \varphi
		\left( x_{1}+r^{\ast }\right) =h$.
		
		\item If $r\geq \frac{1}{\left\vert F^{\prime }\left( x_{1}\right)
			\right\vert }$, then 
		\begin{equation*}
		h\approx \frac{f\left( x_{1}+r\right) }{\left\vert F^{\prime }\left(
			x_{1}+r\right) \right\vert }\text{ and }r-r^{\ast }\approx \frac{1}{%
			\left\vert F^{\prime }\left( x_{1}+r\right) \right\vert }.
		\end{equation*}
		
		\item If $r\leq \frac{1}{\left\vert F^{\prime }\left( x_{1}\right)
			\right\vert }$, then%
		\begin{equation*}
		h\approx rf\left( x_{1}\right) \text{ and }r-r^{\ast }\approx r.
		\end{equation*}
	\end{enumerate}
\end{proposition}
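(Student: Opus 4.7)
The plan is to derive the $A$-geodesic equations from the variational principle, exploit the conservation law arising from $x_2$-translation invariance, and then estimate the resulting integrals using the structure conditions.

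Arc-length-parameterized geodesics for the subunit norm $\sqrt{\dot x_1^2 + \dot x_2^2/f(x_1)^2}$ carry a conserved momentum $\dot x_2/f(x_1)^2 = 1/\lambda$, since the Lagrangian is independent of $x_2$. Combined with $|\dot\gamma|_A = 1$, this gives $\dot x_1 = \sqrt{\lambda^2 - f(x_1)^2}/\lambda$ and $\dot x_2 = f(x_1)^2/\lambda$. Inverting the first relation yields exactly the defining integral for $\lambda(x)$ stated in the proposition, while the turning point (where $\dot x_1 = 0$, i.e., the Euclidean tangent is vertical) is the unique $x$ with $f(x) = \lambda$.

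For item (1), I parameterize the upper boundary of $B(X,r)$ by $\lambda$: each length-$r$ geodesic from $X$ terminates at $(x, y(\lambda, x))$ with
\[
y(\lambda, x) = \int_{x_1}^{x} \frac{f(u)^2}{\sqrt{\lambda^2 - f(u)^2}}\, du \quad \text{subject to} \quad r(\lambda, x) := \int_{x_1}^{x} \frac{\lambda}{\sqrt{\lambda^2 - f(u)^2}}\, du = r.
\]
Computing $\varphi'(x) = \partial_x y + \partial_\lambda y \cdot \lambda'(x)$ and using the constraint to eliminate $\lambda'(x) = -\partial_x r/\partial_\lambda r$, the condition $\varphi'(x) = 0$ reduces (after observing $\partial_\lambda y = \lambda\,\partial_\lambda r$, since $\lambda$ factors out of the integrand) to $\lambda^2 = f(x)^2$, precisely the turning-point condition $x = x_1 + r^*$. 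Since $\varphi$ vanishes at $x_1 \pm r$ and is smooth and positive in the interior, this unique critical point is the maximum, establishing $\varphi(x) \leq h$.

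For (2) and (3), specialize $\lambda^* = f(x_1 + r^*)$ and estimate the two integrals for $r$ and $h$. Using $f' = -fF'$, a Taylor expansion near the turning point gives $(\lambda^*)^2 - f(u)^2 \approx 2 f(x_1+r^*)^2 |F'(x_1+r^*)|(x_1+r^* - u)$. In the large-$r$ regime $r \geq 1/|F'(x_1)|$, both integrals localize to a neighborhood of $x_1 + r^*$ of Euclidean width $\sim 1/|F'(x_1+r^*)|$, and structure conditions (3) and (5) supply $|F'(x_1+r^*)| \approx |F'(x_1+r)|$ and $f(x_1+r^*) \approx f(x_1+r)$, producing the estimates in (2). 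In the small-$r$ regime $r \leq 1/|F'(x_1)|$, $f$ is essentially constant at $f(x_1)$ over $[x_1, x_1+r^*]$, and a Taylor expansion of $\lambda^*$ gives $(\lambda^*)^2 - f(x_1)^2 \approx 2 f(x_1)^2 |F'(x_1)| r^*$; substituting into the integrals yields $r^* \approx 2 r^2 |F'(x_1)|$, whence $r - r^* \approx r$ and $h \approx r f(x_1)$.

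The main obstacle I anticipate is making the small-$r$ regime quantitatively rigorous: one must verify that the approximations $f \approx f(x_1)$ on $[x_1, x_1 + r^*]$ and the Taylor expansion of $\lambda^*$ hold with the constants required by the comparability in (3), which rests on propagating the structure conditions (3)--(5) uniformly at small scales. The near-turning-point localization in the large-$r$ regime is a more standard endpoint asymptotic argument once condition (5) is invoked.
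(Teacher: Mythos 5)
Note first that this paper does not prove Proposition \ref{height}: it is quoted verbatim from \cite{KoRiSaSh1} (``In \cite{KoRiSaSh1} the authors proved the following estimates on the height''), so there is no in-paper proof to compare against. Your outline is nevertheless the correct and standard route, and it matches the method of the cited reference: the Clairaut-type conserved quantity gives the stated implicit definition of $\lambda$, the turning point $f(x)=\lambda$ identifies $r^{\ast}$, and the endpoint asymptotics of $\int \lambda(\lambda^{2}-f^{2})^{-1/2}$ and $\int f^{2}(\lambda^{2}-f^{2})^{-1/2}$ near and away from the turning point (using $f'=-fF'$ and structure conditions (2),(3),(5)) yield (2) and (3) exactly as you describe. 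The only soft spot is part (1): your implicit differentiation gives $\varphi'(x)=\partial_{x}y-\lambda\,\partial_{x}r=-\sqrt{\lambda^{2}-f(x)^{2}}\leq 0$, which establishes monotonicity only on the branch $x_{1}+r^{\ast}<x<x_{1}+r$ where boundary points are endpoints of geodesics before their turning point; for $x<x_{1}+r^{\ast}$ the upper boundary is traced by geodesics that have already turned (and for $x<x_{1}$ by leftward geodesics), so a separate monotonicity argument on that branch, together with care at $x=x_{1}+r^{\ast}$ where $\partial_{\lambda}r$ diverges, is needed to conclude $\varphi(x)\leq h$ there.
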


Now consider a sequence of metric balls $\left\{ B\left( x,r_{k}\right)
\right\} _{k=1}^{\infty }$ centered at $x\in \Omega $ with radii $%
r_{k}\searrow 0$ such that $r_{0}=r$ and%
\begin{equation*}
\left\vert B\left( x,r_{k}\right) \setminus B\left( x,r_{k+1}\right)
\right\vert \approx \left\vert B\left( x,r_{k+1}\right) \right\vert ,\ \ \ \
\ k\geq 1,
\end{equation*}%
so that $B\left( x,r_{k}\right) $ is divided into two parts having
comparable area. We may in fact assume that%
\begin{equation}
r_{k+1}=\left\{ 
\begin{array}{lll}
r^{\ast }\left( x_{1},r_{k}\right) &  & \text{if }r_{k}\geq \frac{1}{%
	\left\vert F^{\prime }\left( x_{1}\right) \right\vert } \\ 
\frac{1}{2}r_{k} &  & \text{if }r_{k}<\frac{1}{\left\vert F^{\prime }\left(
	x_{1}\right) \right\vert }%
\end{array}%
\right.  \label{rkp1}
\end{equation}%
where $r^{\ast }$ is defined in Proposition \ref{height}. Now for $x_{1},t>0$
define 
\begin{equation*}
h^{\ast }\left( x_{1},t\right) =\int_{x_{1}}^{x_{1}+t}\frac{f^{2}\left(
	u\right) }{\sqrt{f^{2}\left( x_{1}+t\right) -f^{2}\left( u\right) }}du,
\end{equation*}%
so that $h^{\ast }\left( x_{1},t\right) $ describes the `height' above $%
x_{2} $ at which the geodesic through $x=\left( x_{1},x_{2}\right) $ curls
back toward the $y$-axis at the point $\left( x_{1}+t,x_{2}+h^{\ast }\left(
x_{1},t\right) \right) $. Then in the case $r_{k}\geq \frac{1}{\left\vert
	F^{\prime }\left( x_{1}\right) \right\vert }$, we have $h^{\ast }\left(
x_{1},r_{k+1}\right) =h\left( x_{1},r_{k}\right) $, $k\geq 0$, where $%
h\left( x_{1},r_{k}\right) $ is the height of $B\left( x,r_{k}\right) $. In
the opposite case $r_{k}<\frac{1}{\left\vert F^{\prime }\left( x_{1}\right)
	\right\vert }$, we have $r_{k+1}=\frac{1}{2}r_{k}$ instead, and we will
estimate differently.

For $k\geq 0$ define 
\begin{equation*}
E\left( x,r_{k}\right) \equiv \left\{ 
\begin{array}{ccc}
\left\{ y:x_{1}+r_{k+1}\leq y_{1}<x_{1}+r_{k},~\left\vert y_{2}\right\vert
<h^{\ast }\left( x_{1},y_{1}-x_{1}\right) \right\} & \text{ if } & r_{k}\geq 
\frac{1}{\left\vert F^{\prime }\left( x_{1}\right) \right\vert } \\ 
\left\{ y:x_{1}+r_{k+1}\leq y_{1}<x_{1}+r_{k},~\left\vert y_{2}\right\vert
<h^{\ast }\left( x_{1},r_{k}^{\ast }\right) =h\left( x_{1},r_{k}\right)
\right\} & \text{ if } & r_{k}<\frac{1}{\left\vert F^{\prime }\left(
	x_{1}\right) \right\vert }%
\end{array}%
\right. ,
\end{equation*}%
where we have written $r_{k}^{\ast }=r^{\ast }\left( x_{1},r_{k}\right) $
for convenience. In \cite{KoRiSaSh1} it was shown that 
\begin{equation}
\left\vert E\left( x,r_{k}\right) \right\vert \approx \left\vert E\left(
x,r_{k}\right) \bigcap B\left( x,r_{k}\right) \right\vert \approx \left\vert
B\left( x,r_{k}\right) \right\vert \text{ for all }k\geq 1,
\label{claim that}
\end{equation}%
and hence that 
\begin{equation*}
\left\vert E\left( x,r_{k}\right) \cap B\left( x,r_{k}\right) \right\vert
\geq \frac{1}{2}cr_{k}f\left( x_{1}\right) r_{k}\approx \left\vert B\left(
x,r_{k}\right) \right\vert \geq \left\vert E\left( x,r_{k}\right) \cap
B\left( x,r_{k}\right) \right\vert .
\end{equation*}

Now define $\Gamma \left( x,r\right) $ to be the set 
\begin{equation*}
\Gamma \left( x,r\right) =\dbigcup\limits_{k=1}^{\infty }E\left(
x,r_{k}\right) .
\end{equation*}%
The following lemma was proved in \cite{KoRiSaSh1}.

\begin{lemma}
	\label{lemma-subrepresentation}With notation as above, in particular with $%
	r_{0}=r$ and $r_{1}$ given by (\ref{rkp1}), and assuming $%
	\int_{E(x,r_{1})}w=0$, we have the subrepresentation formula%
	\begin{equation}
	w\left( x\right) \leq C\int_{\Gamma \left( x,r\right) }\left\vert \nabla
	_{A}w\left( y\right) \right\vert \frac{\widehat{d}\left( x,y\right) }{%
		\left\vert B\left( x,d\left( x,y\right) \right) \right\vert }dy,
	\label{subrepresentation}
	\end{equation}%
	where $\nabla _{A}$ is as in (\ref{def A grad}) and 
	\begin{equation*}
	\widehat{d}\left( x,y\right) \equiv \min \left\{ d\left( x,y\right) ,\frac{1%
	}{\left\vert F^{\prime }\left( x_{1}+d\left( x,y\right) \right) \right\vert }%
	\right\} .
	\end{equation*}
\end{lemma}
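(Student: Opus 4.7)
The plan is to derive (\ref{subrepresentation}) by telescoping $w(x)$ along averages over the annular tiles $E(x, r_k)$ and applying a sharp Poincaré inequality on each control ball $B(x, r_k)$.

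First, using the doubling of the control measure from \cite{KoRiSaSh1} together with the comparability $|E(x, r_k)| \approx |B(x, r_k)|$ of (\ref{claim that}), the Lebesgue differentiation theorem yields $w_{E(x, r_k)} \to w(x)$ for a.e.\ $x$ as $k \to \infty$. Combining this with the normalization $w_{E(x, r_1)} = 0$ gives the telescoping identity
\begin{equation*}
w(x) \;=\; \sum_{k=1}^{\infty} \bigl(w_{E(x, r_{k+1})} - w_{E(x, r_k)}\bigr).
\end{equation*}

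Second, I would bound each telescoping difference by a $(1,1)$-Poincaré inequality on $B(x, r_k)$. Since both $E(x, r_k)$ and $E(x, r_{k+1})$ occupy a fixed fraction of $B(x, r_k)$, the differences of their $w$-averages are controlled by $|B(x, r_k)|^{-1}\int_{B(x, r_k)} |w - w_{B(x, r_k)}|$, and then the Poincaré inequality in the control metric yields
\begin{equation*}
\bigl|w_{E(x, r_{k+1})} - w_{E(x, r_k)}\bigr| \;\leq\; \frac{C\, \rho_k}{|B(x, r_k)|} \int_{B(x, r_k)} |\nabla_A w|,
\end{equation*}
where $\rho_k \approx \min\{r_k,\, 1/|F'(x_1 + r_k)|\}$ is the \emph{height} of $B(x, r_k)$, supplied by Proposition \ref{height}. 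The two regimes in the definition (\ref{rkp1}) of $r_{k+1}$ line up exactly with the two regimes of that height.

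Third, summing over $k$ and using the doubling comparisons $|B(x, d(x, y))| \approx |B(x, r_k)|$ and $\widehat d(x, y) \approx \rho_k$ for $y \in E(x, r_k)$, together with the disjointness of the tiles whose union is $\Gamma(x, r)$, repacks the series into the stated bound
\begin{equation*}
w(x) \;\leq\; C \int_{\Gamma(x, r)} |\nabla_A w(y)| \, \frac{\widehat d(x, y)}{|B(x, d(x, y))|} \, dy.
\end{equation*}

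The main obstacle is obtaining the Poincaré inequality with the sharp length $\widehat d$ rather than the full diameter $d$: when $r_k \geq 1/|F'(x_1)|$ the control balls are highly anisotropic, so Poincaré on $d$ would be badly wasteful. One must integrate along the vertical geodesic $\beta_{X, P}$ of Proposition \ref{height} and exploit the identity $h^*(x_1, r_{k+1}) = h(x_1, r_k)$ to confirm that the effective Poincaré length is the height $h \approx f(x_1 + r)/|F'(x_1 + r)|$, not $r$. Once this sharp Poincaré estimate is in hand, the remainder is a bookkeeping assembly of the telescoped series.
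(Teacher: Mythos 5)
The paper does not reprove this lemma (it cites \cite{KoRiSaSh1}), so I am evaluating your sketch on its own terms. Your telescoping frame is the right skeleton, and the formula $\rho_k\approx\min\{r_k,1/|F'(x_1+r_k)|\}$ is indeed the correct Poincar\'e scale to aim for. But there are two genuine problems.

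First, a conceptual confusion that propagates through the write-up: $\min\{r_k,1/|F'(x_1+r_k)|\}$ is \emph{not} the height of $B(x,r_k)$. By Proposition~\ref{height}, the height is $h\approx f(x_1+r)/|F'(x_1+r)|$ in the degenerate regime (and $h\approx rf(x_1)$ otherwise), while $\min\{r,1/|F'(x_1+r)|\}\approx r-r^\ast$ is the horizontal width of the outer cap, i.e.\ the quantity $\widehat d$. These differ by a factor of $f$, so $h\ll\widehat d\ll r$. Your middle step uses $\rho_k=\widehat d_k$ (correct target), but your final paragraph asserts ``the effective Poincar\'e length is the height $h\approx f(x_1+r)/|F'(x_1+r)|$,'' which is a different and strictly smaller quantity. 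If you literally tried to prove Poincar\'e with constant $h$ you would be proving something stronger than needed and almost certainly false; if you meant $\widehat d$, the terminology must be corrected because the distinction is precisely where the kernel $\widehat d/|B|\approx 1/h$ comes from ($|B|\approx h\,\widehat d$).

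Second, and more serious, your mechanism for obtaining the sharp constant does not actually produce it. You reduce $|w_{E(x,r_{k+1})}-w_{E(x,r_k)}|$ to $|B_k|^{-1}\int_{B_k}|w-w_{B_k}|$ by comparability of measures and Jensen, and then invoke the $(1,1)$-Poincar\'e inequality on $B(x,r_k)$. But Proposition~\ref{1 1 Poin'} (the only Poincar\'e available here) has constant $r_k$, not $\widehat d_k$; once you replace the pair of tile averages by the full ball deviation $\int_{B_k}|w-w_{B_k}|$ you have thrown away the fact that $E_{k+1}$ and $E_k$ are adjacent shells separated by only $\approx\widehat d_k$ in the $y_1$ direction, and no amount of Poincar\'e on $B_k$ can recover it. The correct route is to exploit that closeness directly --- e.g.\ connect matched points $y\in E_{k+1}$, $z\in E_k$ with short horizontal subunit segments of length $\lesssim\widehat d_k$ (using $h^\ast(x_1,r_{k+1})=h(x_1,r_k)$ to ensure matching $y_2$-ranges), or prove a Poincar\'e on the thin union $E_k\cup E_{k+1}$ with constant $\widehat d_k$. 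Your closing remark gestures at integrating along $\beta_{X,P}$, but that geodesic is vertical and controls the height $h$, not $\widehat d$, so as written it does not close the gap. Finally, the ``repacking'' of $\sum_k(\widehat d_k/|B_k|)\int_{B_k}|\nabla_A w|$ into $\int_\Gamma|\nabla_A w|\,\widehat d/|B|$ is not just tile disjointness: the $B_k$ are nested, so you must show $\sum_{k\le j}\widehat d_k/|B_k|\lesssim\widehat d_j/|B_j|$, a geometric-sum estimate using $|B_{k+1}|\approx\tfrac12|B_k|$ and structure condition (3); this should be stated.
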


Note that when $f\left( r\right) =r^{N}$ is finite type, then $\widehat{d}%
\left( x,y\right) \approx d\left( x,y\right) $. Now define 
\begin{equation}
K_{r}\left( x,y\right) \equiv \frac{\widehat{d}\left( x,y\right) }{%
	\left\vert B\left( x,d\left( x,y\right) \right) \right\vert }\mathbf{1}%
_{\Gamma \left( x,r\right) }\left( y\right) ,  \label{kernel_est}
\end{equation}%
and for 
\begin{equation*}
y\in \Gamma \left( x,r\right) =\left\{ y\in B\left( x,r\right) :x_{1}\leq
y_{1}\leq x_{1}+r,\ \left\vert y_{2}-x_{2}\right\vert <h_{x,y}\right\} ,
\end{equation*}%
let $h_{x,y}=h^{\ast }\left( x_{1},y_{1}-x_{1}\right) $. Denote the dual
cone $\Gamma ^{\ast }\left( y,r\right) $ by%
\begin{equation*}
\Gamma ^{\ast }\left( y,r\right) \equiv \left\{ x\in B\left( y,r\right)
:y\in \Gamma \left( x,r\right) \right\} .
\end{equation*}%
Then we have%
\begin{eqnarray}
\Gamma ^{\ast }\left( y,r\right) &=&\left\{ x\in B\left( y,r\right)
:x_{1}\leq y_{1}\leq x_{1}+r,\ \left\vert y_{2}-x_{2}\right\vert
<h_{x,y}\right\}  \label{Gammastar} \\
&=&\left\{ x\in B\left( y,r\right) :y_{1}-r\leq x_{1}\leq y_{1},\ \left\vert
x_{2}-y_{2}\right\vert <h_{x,y}\right\} ,  \notag
\end{eqnarray}%
and consequently we get the `straight across' estimate,%
\begin{equation}
\int K_{r}\left( x,y\right) ~dx\approx \int_{y_{1}-r}^{y_{1}}\left\{
\int_{y_{2}-h_{x,y}}^{y_{2}+h_{x,y}}\frac{1}{h_{x,y}}dx_{2}\right\}
dx_{1}\approx \int_{x_{1}}^{x_{1}+r}dy_{1}=r\ .  \label{straight}
\end{equation}

\subsection{Higher dimensions}\label{higher_dim_sec}

Recall that in the two dimensional case, we have%
\begin{equation*}
\left\vert B_{2D}\left( x,d\left( x,y\right) \right) \right\vert \approx
h_{x,y}\widehat{d}\left( x,y\right) \approx h_{x,y}\min \left\{ d\left(
x,y\right) ,\frac{1}{\left\vert F^{\prime }\left( x_{1}+d\left( x,y\right)
	\right) \right\vert }\right\} .
\end{equation*}%
In the three dimensional case, the quantities $h_{x,y}$ and $\widehat{d}%
\left( x,y\right) $ remain formally the same (see Chapter 10 of \cite%
{KoRiSaSh1}) and we can write a typical geodesic in the form 
\begin{equation*}
\left\{ 
\begin{array}{l}
x_{2}=C_{2}\pm k\int_{0}^{x_{1}}\frac{\lambda }{\sqrt{\lambda ^{2}-[f(u)]^{2}%
}}\,du \\ 
x_{3}=C_{3}\pm \int_{0}^{x_{1}}\frac{[f(u)]^{2}}{\sqrt{\lambda
		^{2}-[f(u)]^{2}}}\,du%
\end{array}%
\right. ,
\end{equation*}%
so that a metric ball centered at $y=\left( y_{1},y_{2},y_{3}\right) $ with
radius $r>0$ is given by 
\begin{equation*}
B\left( y,r\right) \equiv \left\{ \left( x_{1},x_{2},x_{3}\right) :\left(
x_{1},x_{3}\right) \in B_{2D}\left( \left( y_{1},y_{3}\right) ,\sqrt{%
	r^{2}-\left\vert x_{2}-y_{2}\right\vert ^{2}}\right) \right\} ,
\end{equation*}%
where $B_{2D}\left( a,s\right) $ denotes the $2$-dimensional control ball
centered at $a$ in the plane parallel to the $x_{1},x_{3}$-plane with radius 
$s$ that was associated with $f$ above (see Corollaries 107 and 108 in \cite%
{KoRiSaSh1} and the subsequent paragraph).

In dimension $n\geq 4$, the same arguments show that a typical geodesic has
the form 
\begin{equation*}
\left\{ 
\begin{array}{l}
\mathbf{x}_{2}=\mathbf{C}_{2}\pm \mathbf{k}\int_{0}^{x_{1}}\frac{\lambda }{%
	\sqrt{\lambda ^{2}-[f(u)]^{2}}}\,du \\ 
x_{3}=C_{3}\pm \int_{0}^{x_{1}}\frac{[f(u)]^{2}}{\sqrt{\lambda
		^{2}-[f(u)]^{2}}}\,du%
\end{array}%
\right. ,
\end{equation*}%
where $\mathbf{x}_{2},\mathbf{C}_{2},\mathbf{k}\in \mathbb{R}^{n-2}$ are now 
$\left( n-2\right) $-dimensional vectors, so that a metric ball centered at 
\begin{equation*}
y=\left( y_{1},\mathbf{y}_{2},y_{3}\right) \in \mathbb{R}\times \mathbb{R}%
^{n-2}\times \mathbb{R}=\mathbb{R}^{n},
\end{equation*}%
with radius $r>0$ is given by 
\begin{equation*}
B\left( y,r\right) \equiv \left\{ \left( x_{1},\mathbf{x}_{2},x_{3}\right)
:\left( x_{1},x_{3}\right) \in B_{2D}\left( \left( y_{1},y_{3}\right) ,\sqrt{%
	r^{2}-\left\vert \mathbf{x}_{2}-\mathbf{y}_{2}\right\vert ^{2}}\right)
\right\} ,
\end{equation*}%
where $B_{2D}\left( a,s\right) $ denotes the $2$-dimensional control ball
centered at $a$ in the plane parallel to the $x_{1},x_{3}$-plane with radius 
$s$ that was associated with $f$ above. The following lemma was proved in 
\cite{KoRiSaSh2}, correcting Lemma 109 in Chapter 10 of \cite{KoRiSaSh1}.

\begin{lemma}
	\label{new}The Lebesgue measure of the \emph{three} dimensional ball $%
	B_{3D}\left( x,r\right) $ satisfies%
	\begin{equation*}
	\left\vert B_{3D}\left( x,r\right) \right\vert \approx \left\{ 
	\begin{array}{ccc}
	r^{3}f(x_{1}) & \text{ if } & r\leq \frac{2}{\left\vert F^{\prime }\left(
		x_{1}\right) \right\vert } \\ 
	\frac{f\left( x_{1}+r\right) }{\left\vert F^{\prime }\left( x_{1}+r\right)
		\right\vert ^{3}}\sqrt{r\left\vert F^{\prime }\left( x_{1}+r\right)
		\right\vert } & \text{ if } & r\geq \frac{2}{\left\vert F^{\prime }\left(
		x_{1}\right) \right\vert }%
	\end{array}%
	\right. ,
	\end{equation*}%
	and that of the $n$-dimensional ball $B_{nD}\left( x,r\right) $ satisfies%
	\begin{equation*}
	\left\vert B_{nD}\left( x,r\right) \right\vert \approx \left\{ 
	\begin{array}{ccc}
	r^{n}f(x_{1}) & \text{ if } & r\leq \frac{2}{\left\vert F^{\prime }\left(
		x_{1}\right) \right\vert } \\ 
	\frac{f\left( x_{1}+r\right) }{\left\vert F^{\prime }\left( x_{1}+r\right)
		\right\vert ^{n}}\left( r\left\vert F^{\prime }\left( x_{1}+r\right)
	\right\vert \right) ^{\frac{n}{2}-1} & \text{ if } & r\geq \frac{2}{%
		\left\vert F^{\prime }\left( x_{1}\right) \right\vert }%
	\end{array}%
	\right. .
	\end{equation*}
\end{lemma}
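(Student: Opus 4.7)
My plan is to reduce the computation to the two-dimensional formula (\ref{measure}) by slicing. The description of $B_{nD}(y,r)$ in Section \ref{higher_dim_sec} realizes it as the set of $(x_1,\mathbf{x}_2,x_3)$ with $(x_1,x_3)\in B_{2D}((y_1,y_3),\sqrt{r^2-|\mathbf{x}_2-\mathbf{y}_2|^2})$, so Fubini together with polar coordinates in $\mathbf{x}_2-\mathbf{y}_2\in\mathbb{R}^{n-2}$ gives
\[
|B_{nD}(y,r)|\;\approx\;\int_0^r t^{n-3}\,|B_{2D}((y_1,y_3),\sqrt{r^2-t^2})|\,dt.
\]
The substitution $s=\sqrt{r^2-t^2}$ converts this into the cleaner form
\[
|B_{nD}(y,r)|\;\approx\;\int_0^r s\,(r^2-s^2)^{(n-4)/2}\,|B_{2D}((y_1,y_3),s)|\,ds,
\]
into which one substitutes (\ref{measure}) and then estimates the integral in each of the two radius regimes.

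In the small regime $r\le 2/|F'(y_1)|$, structure condition (3) forces $|F'(y_1+s)|\approx|F'(y_1)|$ for every $s\in[0,r]$, so $|B_{2D}(\cdot,s)|\approx s^2 f(y_1)$ throughout, and the elementary beta-type identity $\int_0^r s^3(r^2-s^2)^{(n-4)/2}\,ds\approx r^n$ delivers $|B_{nD}(y,r)|\approx r^n f(y_1)$.

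In the large regime $r\ge 2/|F'(y_1)|$, I would show that the integrand concentrates in the window $W:=[r-1/|F'(y_1+r)|,\,r]$. For $s\in W$, structure conditions (3) and (5) yield $f(y_1+s)\approx f(y_1+r)$ and $|F'(y_1+s)|\approx|F'(y_1+r)|$, so $|B_{2D}(\cdot,s)|\approx f(y_1+r)/|F'(y_1+r)|^2$. Setting $\tau=r-s$, using $r^2-s^2\approx 2r\tau$, and rescaling via $u=|F'(y_1+r)|\tau$ converts the $W$-contribution into
\[
\frac{r\,f(y_1+r)}{|F'(y_1+r)|^2}\int_0^{1/|F'(y_1+r)|}(2r\tau)^{(n-4)/2}\,d\tau\;\approx\;\frac{f(y_1+r)}{|F'(y_1+r)|^n}\bigl(r|F'(y_1+r)|\bigr)^{(n-2)/2},
\]
exactly the claimed order.

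The main obstacle is absorbing the tail $s\in[0,r-1/|F'(y_1+r)|]$ into this expression. The crucial input is the pointwise bound $f(y_1+s)\le f(y_1+r)\,e^{-|F'(y_1+r)|(r-s)}$, obtained from $F(y_1+s)-F(y_1+r)=\int_s^r|F'(y_1+u)|\,du\ge|F'(y_1+r)|(r-s)$ because $|F'|$ is decreasing. In the range where structure condition (3) applies, $|F'(y_1+s)|\approx|F'(y_1+r)|$, so the tail integrand is dominated by $(2r\tau)^{(n-4)/2}e^{-|F'(y_1+r)|\tau}$, and the same rescaling $u=|F'(y_1+r)|\tau$ produces a convergent integral contributing only a constant multiple of the main term. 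For still smaller $s$, the increased size of $|F'(y_1+s)|$, quantified by structure condition (4) (the monotonicity of $1/(-xF'(x))$), only strengthens the exponential decay of $f(y_1+s)$, and a dyadic decomposition in $\tau$ sums geometrically. The three-dimensional formula is then just the specialization $n=3$ of this general computation.
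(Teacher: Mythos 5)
Your argument is correct: the Fubini/polar-coordinate reduction to the two-dimensional formula (\ref{measure}), the beta-type integral in the small regime, and the concentration of the $s$-integral in the window of width $1/|F'(y_1+r)|$ (with the tail controlled by $f(y_1+s)\le f(y_1+r)e^{-|F'(y_1+r)|(r-s)}$, which follows from the monotonicity of $|F'|$) all check out and reproduce both displayed formulas. The paper itself gives no proof of this lemma --- it is quoted from \cite{KoRiSaSh2} --- but your slicing computation is the natural one and is exactly the continuous analogue of the shell decomposition the paper does use in Section 2, where $|\widetilde{E}(x,r_k)|=\bigl(\sqrt{r_k^2-r_{k+1}^2}\bigr)^{n-2}\,2h^{\ast}(x_1,r_k)(r_k-r_{k+1})\approx|B(x,r_k)|$ discretizes the same Fubini integral over $\mathbf{x}_2$.
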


\section{Proportional vanishing $L^{1}$-Sobolev inequality}

Our geometric continuity theorem requires the proportional vanishing $L^{1}$%
-Sobolev inequality, which we will now establish. For simplicity we consider first the $2$-dimensional case.

Define 
\begin{equation}
K_{r}\left( x,y\right) \equiv \frac{\widehat{d}\left( x,y\right) }{%
	\left\vert B\left( x,d\left( x,y\right) \right) \right\vert }\mathbf{1}%
_{\Gamma \left( x,r\right) }\left( y\right) ,  \label{kernel_est}
\end{equation}%
and 
\begin{equation*}
\Gamma \left( x,r\right) =\left\{ y\in B\left( x,r\right) :x_{1}\leq
y_{1}\leq x_{1}+r,\ \left\vert y_{2}-x_{2}\right\vert <h^{\ast }\left(
x_{1},y_{1}-x_{1}\right) \right\} ,
\end{equation*}
and for $y\in \Gamma \left( x,r\right) $ let $h_{x,y}=h^{\ast }\left(
x_{1},y_{1}-x_{1}\right) $. Using the estimate $\left\vert B\left( x,d\left(
x,y\right) \right) \right\vert \approx h_{x,y}\widehat{d}(x,y)$ from Section \ref{higher_dim_sec} we have 
\begin{equation*}
K_{r}\left( x,y\right) \approx \frac{1}{h_{x,y}}\mathbf{1}_{\left\{ \left(
	x,y\right) :x_{1}\leq y_{1}\leq x_{1}+r,\ \left\vert y_{2}-x_{2}\right\vert
	<h_{x,y}\right\} }\left( x,y\right) .
\end{equation*}

Now denote the dual cone $\Gamma ^{\ast }\left( y,r\right) $ by%
\begin{equation*}
\Gamma ^{\ast }\left( y,r\right) \equiv \left\{ x\in B\left( y,r\right)
:y\in \Gamma \left( x,r\right) \right\} .
\end{equation*}%
Then we have%
\begin{eqnarray}
\Gamma ^{\ast }\left( y,r\right) &=&\left\{ x\in B\left( y,r\right)
:x_{1}\leq y_{1}\leq x_{1}+r,\ \left\vert y_{2}-x_{2}\right\vert
<h_{x,y}\right\}  \label{Gammastar} \\
&=&\left\{ x\in B\left( y,r\right) :y_{1}-r\leq x_{1}\leq y_{1},\ \left\vert
x_{2}-y_{2}\right\vert <h_{x,y}\right\} ,  \notag
\end{eqnarray}%
and consequently we get the `straight across' estimate in $n=2$ dimensions,%
\begin{equation}
\int K_{r}\left( x,y\right) ~dx\approx \int_{y_{1}-r}^{y_{1}}\left\{
\int_{y_{2}-h_{x,y}}^{y_{2}+h_{x,y}}\frac{1}{h_{x,y}}dx_{2}\right\}
dx_{1}\approx \int_{x_{1}}^{x_{1}+r}dy_{1}=r\ .  \label{straight}
\end{equation}

Turning now to the case of $n\geq 3$ dimensions, we have using Lemma \ref{new}
that%
\begin{equation*}
K_{B\left( 0,r_{0}\right) }\left( x,y\right) \approx 
\begin{cases}
\begin{split}
\frac{1}{r^{n-1}f(x_{1})}\mathbf{1}_{\widetilde{\Gamma }\left(
	x,r_{0}\right) }\left( y\right) ,\quad 0& <r=y_{1}-x_{1}<\frac{2}{|F^{\prime
	}(x_{1})|} \\
\frac{\left\vert F^{\prime }\left( x_{1}+r\right) \right\vert ^{n-1}}{%
	f(x_{1}+r)\lambda \left( x_{1},r\right) ^{n-2}}\mathbf{1}_{\widetilde{\Gamma 
	}\left( x,r_{0}\right) }\left( y\right) ,\quad R& \geq r=y_{1}-x_{1}\geq 
\frac{2}{|F^{\prime }(x_{1})|}
\end{split}%
\end{cases}%
,
\end{equation*}%
where $\lambda \left( x_{1},r\right) \equiv \sqrt{r\left\vert F^{\prime
	}\left( x_{1}+r\right) \right\vert }$. We denote the size of the kernel $%
K_{B\left( 0,r_{0}\right) }(x,y)$ as $\frac{1}{s_{y_{1}-x_{1}}}$ where 
\begin{equation*}
\frac{1}{s_{y_{1}-x_{1}}}\equiv 
\begin{cases}
\begin{split}
\frac{1}{r^{n-1}f\left( x_{1}\right) },\quad 0& <r=y_{1}-x_{1}<\frac{2}{%
	|F^{\prime }(x_{1})|} \\
\frac{|F^{\prime }(x_{1}+r)|^{n}}{f(x_{1}+r)\lambda \left( x_{1},r\right)
	^{n-2}},\quad 0& <r=y_{1}-x_{1}\geq \frac{2}{|F^{\prime }(x_{1})|}
\end{split}%
\end{cases}%
,
\end{equation*}%
and where the quantity $s_{r}$ can be, roughly speaking, thought of a cross
sectional volume analogous to the height $h_{r}$ in the two dimensional
case. We have%
\begin{eqnarray*}
	&&\int_{B_{+}\left( 0,r_{0}\right) }K_{B\left( 0,r_{0}\right) }\left(
	x,y\right) dy \\
	&=&\sum_{k=0}^{\infty }\int_{x_{1}+r_{k+1}}^{x_{1}+r_{k}}\left[
	\int_{\left\vert \mathbf{x}_{2}-\mathbf{y}_{2}\right\vert \leq \sqrt{%
			r_{k}^{2}-r_{k+1}^{2}}}\left\{ \int_{x_{3}-h^{\ast }\left(
		x_{1},r_{k}\right) }^{x_{3}+h^{\ast }\left( x_{1},r_{k}\right) }\frac{1}{%
		s_{y_{1}-x_{1}}}\left\vert B\left( 0,r_{0}\right) \right\vert dy_{3}\right\}
	d\mathbf{y}_{2}\right] dy_{1} \\
	&=&\sum_{k=0}^{\infty }\int_{x_{1}+r_{k+1}}^{x_{1}+r_{k}}\left[ \left( \sqrt{%
		r_{k}^{2}-r_{k+1}^{2}}\right) ^{n-2}2h^{\ast }\left( x_{1},r_{k}\right) %
	\right] \frac{1}{s_{y_{1}-x_{1}}}dy_{1} \\
	&\approx &\sum_{k=0}^{\infty
	}\int_{x_{1}+r_{k+1}}^{x_{1}+r_{k}}s_{y_{1}-x_{1}}\frac{1}{s_{y_{1}-x_{1}}}%
	dy_{1}=\int_{x_{1}}^{x_{1}+r_{0}}dy_{1}=r_{0},
\end{eqnarray*}%
where the approximation in the fourth line above comes from the estimates%
\begin{eqnarray*}
	\left( \sqrt{r_{k}^{2}-r_{k+1}^{2}}\right) ^{n-2}2h^{\ast }\left(
	x_{1},r_{k}\right) \left( r_{k}-r_{k+1}\right) &=&\left\vert \widetilde{E}%
	\left( x,r_{k}\right) \right\vert \approx \left\vert B\left( x,r_{k}\right)
	\right\vert \approx s_{r_{k}}\left( r_{k}-r_{k+1}\right) , \\
	s_{r_{k}} &\approx &s_{y_{1}-x_{1}},\ \ \ \ \ \text{for }x_{1}+r_{k+1}\leq
	y_{1}<x_{1}+r_{k}\ .
\end{eqnarray*}%
This gives the $n$-dimensional `straight across' estimate, 
\begin{equation}
\int_{B\left( 0,r_{0}\right) }K_{B\left( 0,r_{0}\right) }\left( x,y\right)
dy\approx r_{0}\ .  \label{straight n}
\end{equation}

We can now prove the proportional vanishing $L^{1}$-Sobolev inequality by appealing to the the $\left( 1,1\right) $ \emph{Poincar\'{e}}
inequality in \cite{KoRiSaSh2}. We recall it here for convenience
\begin{proposition}
	\label{1 1 Poin'} Let the balls $B(0,r)$ and the degenerate gradient $\nabla
	_{A}$ be as above. There exists a constant $C$ such that the Poincar\'{e}
	Inequality 
	\begin{equation*}
	\int_{B(0,r)}\left\vert w(x)-\bar{w}\right\vert dx\leq
	Cr\int_{B(0,2r)}|\nabla _{A}w|dx
	\end{equation*}%
	holds for any Lipschitz function $w$ and sufficiently small $r>0$. Here $%
	\bar{w}$ is the average defined by 
	\begin{equation*}
	\bar{w}=\frac{1}{|B(0,r)|}\int_{B(0,r)}wdx.
	\end{equation*}
\end{proposition}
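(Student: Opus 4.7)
The natural plan is to derive this Poincar\'e inequality by combining the subrepresentation formula of Lemma \ref{lemma-subrepresentation} with the straight-across estimates (\ref{straight}) and (\ref{straight n}), following the standard duality between subrepresentation and Poincar\'e inequalities.

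First, for each $x \in B(0,r)$, let $c_x = |E(x,r_1)|^{-1}\int_{E(x,r_1)} w$ denote the average of $w$ over the normalizing set appearing in the hypothesis of Lemma \ref{lemma-subrepresentation}. Applying that lemma to both $\tilde w(y) = w(y) - c_x$ and $-\tilde w$ (each of which satisfies $\int_{E(x,r_1)} \tilde w = 0$ and $\nabla_A \tilde w = \nabla_A w$) yields the pointwise bound
\begin{equation*}
|w(x) - c_x| \leq C \int_{B(0,2r)} K_r(x,y) |\nabla_A w(y)|\,dy,
\end{equation*}
with $K_r$ as in (\ref{kernel_est}). Integrating in $x$ over $B(0,r)$, applying Fubini, and invoking the straight-across estimate (\ref{straight}) in dimension two or (\ref{straight n}) in higher dimensions gives $\int_{B(0,r)} K_r(x,y)\,dx \leq Cr$, hence
\begin{equation*}
\int_{B(0,r)} |w(x) - c_x|\,dx \leq Cr \int_{B(0,2r)} |\nabla_A w|\,dy.
\end{equation*}

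The remaining step is to replace the moving averages $c_x$ by the global mean $\bar w$ over $B(0,r)$. Using the elementary fact that $\int_{B(0,r)} |w - \bar w| \leq 2 \inf_{c \in \mathbb{R}} \int_{B(0,r)} |w - c|$, it suffices to bound $\int_{B(0,r)} |w - c^{\ast}|$ for a single constant. Taking $c^{\ast} = c_{x_0}$ for a fixed $x_0$ (say the centre), one bounds $|c_x - c_{x_0}|$ via a Whitney--Boman chain of overlapping sets $E(x_j, r_1)$ connecting $x_0$ to $x$, and estimates each link by reapplying the previous display at the appropriate scale.

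The main obstacle will be the chain argument in the last step: one must verify that the geometry of the control balls described in Proposition \ref{height} and Lemma \ref{new} admits a chain of uniformly bounded length with sufficiently overlapping $E(x_j,r_1)$'s, and that the accumulated error in $\int |\nabla_A w|$ remains $O(r)$ uniformly as $r \to 0$ in the infinitely degenerate regime. By contrast, the initial application of the subrepresentation combined with Fubini is essentially formal once Lemma \ref{lemma-subrepresentation} and the straight-across estimates are in hand.
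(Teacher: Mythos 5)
First, a point of reference: this paper does not actually prove Proposition \ref{1 1 Poin'}; it is explicitly recalled from \cite{KoRiSaSh2}, and the only structural information given here about its proof is the remark, inside the proof of Proposition \ref{vanishing}, that the argument really yields the symmetric estimate $\frac{1}{|B|}\int\int_{B\times B}|w(x)-w(y)|\,dx\,dy\leq Cr\int_{2B}|\nabla_{A}w|$. Your first half is the right machinery and is surely the engine of the cited proof: applying Lemma \ref{lemma-subrepresentation} to $\pm(w-c_{x})$ and then using Fubini together with the straight-across estimates (\ref{straight}) and (\ref{straight n}) correctly gives $\int_{B(0,r)}|w(x)-c_{x}|\,dx\leq Cr\int_{B(0,2r)}|\nabla_{A}w|$.

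The gap is in the second half, which you yourself flag as the main obstacle and do not carry out; unfortunately this is precisely the infinitely degenerate part of the proof, and the repair you propose would fail as described. A Boman chain of \emph{uniformly bounded length} joining overlapping sets $E(x_{j},r_{1})$ does not exist here: by Proposition \ref{height}, in the degenerate regime $r\geq 1/|F'(x_{1})|$ the set $E(x,r_{k})$ is a slab of thickness $r_{k}-r_{k+1}\approx 1/|F'(x_{1}+r_{k})|$ in the $x_{1}$-direction, so the number of mutually overlapping such slabs needed to traverse $B(0,r)$ is of order $r|F'(r)|$, which tends to infinity as $r\to 0$ exactly because $F$ is infinitely degenerate (it is bounded only in the finite type case $f(x)=x^{N}$). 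Charging each link of such a chain the full cost $Cr\int_{2B}|\nabla_{A}w|$ therefore destroys the estimate; the summation over the infinitely many scales is precisely what the weights $\widehat{d}(x,y)/|B(x,d(x,y))|$ in the subrepresentation kernel are built to absorb, and cannot be redone crudely on top of it. The passage from the moving averages $c_{x}$ to the fixed average $\bar{w}$ has to be organized differently — the form of the statement actually used in Proposition \ref{vanishing} indicates that the cited proof controls the pairwise differences $|w(x)-w(y)|$ and deduces the Poincar\'{e} inequality from the symmetric double integral by Jensen, rather than comparing normalizing sets along a chain. Until that step is supplied with constants uniform as $r\to 0$, the proof is incomplete.
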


This leads to the following proportional vanishing $L^{1}$-Sobolev inequality
\begin{proposition}
	\label{vanishing}Let the balls $B(0,r)$ and the degenerate gradient $\nabla
	_{A}$ be as above. There exists a constant $C$ such that the proportional
	vanishing $L^{1}$-Sobolev inequality 
	\begin{equation}
	\int_{B(0,r)}\left\vert w\right\vert dx\leq Cr\int_{B(0,2r)}|\nabla _{A}w|dx,
	\label{proportional_sob}
	\end{equation}%
	holds for any Lipschitz function $w$ that vanishes on a subset $E$ of the
	ball $B\left( 0,r\right) $ with $\left\vert E\right\vert \geq \frac{1}{2}%
	\left\vert B\left( 0,r\right) \right\vert $, and all sufficiently small $r>0$%
	.
\end{proposition}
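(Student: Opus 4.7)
The plan is to deduce the proportional vanishing $L^{1}$-Sobolev inequality directly from the $(1,1)$-Poincar\'e inequality of Proposition \ref{1 1 Poin'} by the standard trick of using the vanishing of $w$ on a large subset to control the mean value $\bar w$.

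First, I would apply Proposition \ref{1 1 Poin'} to obtain
\begin{equation*}
\int_{B(0,r)} \left| w(x) - \bar w \right| dx \leq C r \int_{B(0,2r)} |\nabla_A w|\, dx,
\end{equation*}
where $\bar w = \tfrac{1}{|B(0,r)|}\int_{B(0,r)} w\, dx$. This already handles the oscillation of $w$ around its average; the remaining task is to show that the average itself is controlled by the oscillation, which is precisely where the hypothesis $|E| \geq \tfrac{1}{2}|B(0,r)|$ enters.

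Since $w \equiv 0$ on $E$, restricting the Poincar\'e integral to $E$ gives
\begin{equation*}
|\bar w|\cdot |E| = \int_{E} |0 - \bar w|\, dx \leq \int_{B(0,r)} |w - \bar w|\, dx,
\end{equation*}
and the measure condition $|E| \geq \tfrac{1}{2}|B(0,r)|$ then yields
\begin{equation*}
|\bar w| \leq \frac{2}{|B(0,r)|}\int_{B(0,r)} |w - \bar w|\, dx.
\end{equation*}
Combining this with the triangle inequality $|w| \leq |w - \bar w| + |\bar w|$ and integrating over $B(0,r)$ gives
\begin{equation*}
\int_{B(0,r)} |w|\, dx \leq \int_{B(0,r)} |w - \bar w|\, dx + |\bar w|\cdot |B(0,r)| \leq 3 \int_{B(0,r)} |w - \bar w|\, dx,
\end{equation*}
and applying Proposition \ref{1 1 Poin'} once more produces the desired bound with constant $3C$.

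There is essentially no hard step here, since all the geometric work — the measure estimates on control balls, the subrepresentation formula, and the $(1,1)$-Poincar\'e inequality on the doubled ball $B(0,2r)$ — was already established in \cite{KoRiSaSh1,KoRiSaSh2}. The only subtlety worth flagging is that the hypothesis only gives $|E|\geq \tfrac12|B(0,r)|$, not a larger proportion, so any attempt to sharpen the constant would require a more delicate argument; for the qualitative statement as written, the factor $\tfrac12$ plays no role beyond ensuring $|E|$ is comparable to $|B(0,r)|$. Similarly, the proof works uniformly in dimension $n\geq 2$ because Proposition \ref{1 1 Poin'} is already stated in arbitrary dimension for the geometries under consideration.
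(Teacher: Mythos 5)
Your proof is correct, and it rests on the same idea as the paper's: use $|E|\ge\tfrac12|B(0,r)|$ together with the vanishing of $w$ on $E$ to control the mean, then invoke the $(1,1)$-Poincar\'e inequality. The only difference is in execution: the paper takes the reference constant to be the average of $w$ over $E\cap B$ (which is zero), bounds the resulting integral by the symmetric double integral $\frac{1}{|B|}\iint_{B\times B}|w(x)-w(y)|\,dx\,dy$, and then appeals to the \emph{proof} of Proposition \ref{1 1 Poin'} to control that double integral by $Cr\int_{2B}|\nabla_A w|$; you instead center at $\bar w$, use $\int_E|\bar w|\le\int_B|w-\bar w|$ to bound $|\bar w|$, and finish with the triangle inequality. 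Your route has the minor advantage of using only the \emph{statement} of Proposition \ref{1 1 Poin'} as a black box (at the cost of applying it twice and picking up the harmless factor $3$), whereas the paper's version needs the double-integral intermediate estimate that is established inside the proof of that proposition rather than in its statement.
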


\begin{proof}
	We have 
	\begin{align}
	\int_{B(0,r)}\left\vert w\right\vert dx& =\int_{B(0,r)}\left\vert w(x)-\frac{%
		1}{|E\cap B|}\int_{E\cap B}w(y)dy\right\vert dx  \notag  \label{poin_est_1}
	\\
	& \leq \frac{1}{|E\cap B|}\int \int_{B\times E\cap B}\left\vert
	w(x)-w(y)\right\vert dxdy\lesssim \frac{1}{|B|}\int \int_{B\times
		B}\left\vert w(x)-w(y)\right\vert dxdy.
	\end{align}%
	Next, it follows from the proof of Propositions \ref{1 1 Poin'} that 
	\begin{equation}
	\frac{1}{|B|}\int_{B\times B}\left\vert w(x)-w(y)\right\vert dxdy\leq
	Cr\int_{2B}\left\vert \nabla _{A}w\right\vert .  \label{poin_est_2}
	\end{equation}%
	Estimate \ref{proportional_sob} follows from (\ref{poin_est_1}) and (\ref%
	{poin_est_2}).
\end{proof}

\subsection{Orlicz-Sobolev inequality}
In this section we state the Orlicz-Sobolev inequality proved in \cite{KoRiSaSh2}
\begin{equation}
\left\Vert w\right\Vert _{L^{\Phi }\left( \mu _{r_{0}}\right) }\leq C\varphi
\left( r_{0}\right) \ \left\Vert \nabla _{A}w\right\Vert _{L^{1}\left( \mu
	_{r_{0}}\right) },\ \ \ \ \ w\in \left( W_{A}^{1,1}\right) _{0}\left(
B\left( 0,r_{0}\right) \right)  \label{Phi N norm}
\end{equation}%
where the particular family of Young functions $\Phi$ we are interested in, is defined as follows
\begin{equation}
\Phi _{N}\left( t\right) \equiv \left\{ 
\begin{array}{ccc}
t(\ln t)^{N} & \text{ if } & t\geq E=E_{N}=e^{2N} \\ 
\left( \ln E\right) ^{N}t & \text{ if } & 0\leq t\leq E=E_{N}=e^{2N}%
\end{array}%
\right. .  \label{def Phi N ext}
\end{equation}%
This is Proposition 70 in \cite{KoRiSaSh2}
\begin{proposition}
	\label{sob_nd} Let $n\geq 2$. Assume that for some $C>0$ the function 
	\begin{equation}
	\varphi (r)\equiv C|F^{\prime }\left( r\right) |^{N}r^{N+1}
	\label{phi_incr N}
	\end{equation}%
	satisfies $\lim_{r\rightarrow 0}\varphi (r)=0$. Assume in addition that
	geometry $F$ satisfies 
	\begin{equation}
	F^{\prime \prime }(r)\leq \left( 1+\frac{1-\varepsilon }{N}\right) \frac{%
		|F^{\prime }(r)|}{r},\quad r\in (0,r_{0}),\quad \varepsilon >0.
	\label{geom_extra_cond}
	\end{equation}%
	Then:
	
	\begin{enumerate}
		\item the $\left( \Phi ,\varphi \right) $-Sobolev inequality (\ref{Phi N
			norm}) holds with geometry $F$, with $\varphi $ as in (\ref{phi_incr N}),
		and with $\Phi $ as in (\ref{def Phi N ext}), $N>1$,
		
		\item and if $\varphi _{\max }\left( r\right) \equiv \sup_{0<s<r_{0}}\varphi
		(s)<\infty $ is a finite constant function, then the $\left( \Phi ,\varphi
		_{\max }\right) $-Sobolev inequality (\ref{Phi N norm}) holds with
		geometry $F$, with $\varphi $ as in (\ref{phi_incr N}), and with $\Phi $ as
		in (\ref{def Phi N ext}), $N>1$,
		
		\item in particular, if for some $\varepsilon >0$ we have 
		\begin{equation}
		\left\vert F^{\prime }\left( r\right) \right\vert \leq C\left( \frac{1}{r}%
		\right) ^{1+\frac{1-\varepsilon }{N}},  \label{F_prime_bound N}
		\end{equation}%
		then the $\left( \Phi ,\varphi _{\max }\right) $-Sobolev inequality (\ref%
		{Phi N norm}) holds with geometry $F$ and $\varphi _{\max }(r)\equiv C$.
	\end{enumerate}
\end{proposition}

\subsection{The DeGiorgi Lemma}

Here is an infinitely degenerate variation on the DeGiorgi Lemma in Lemma
1.4 of Caffarelli and Vasseur \cite{CaVa}, but yielding an estimate
different from that of Caffarelli and Vasseur - one that does not involve an
isoperimetric inequality. For convenience we recall the
proportional vanishing $L^{1}$-Sobolev inequality (\ref{proportional_sob}) from the previous section:%
\begin{eqnarray}
&&\ \ \ \ \ \ \ \ \ \ \int_{B}\left\vert w\right\vert \leq Cr\left( B\right)
\int_{2B}\left\vert \nabla _{A}w\right\vert ,  \label{prop van'} \\
&&\text{for all Lipschitz }w\text{ supported in }2B\text{ that vanish on a
	subset }E\text{ of a ball }B\text{ with }\left\vert E\right\vert \geq \frac{1%
}{2}\left\vert B\right\vert .  \notag
\end{eqnarray}

\begin{lemma}
	\label{DeG Lemma}Suppose that the proportional vanishing $L^{1}$-Sobolev
	inequality (\ref{prop van'}) holds. Fix $x$ and $r$ and suppose that $w$
	satisfies $\int_{B\left( x,2r\right) }\left\vert \nabla _{A}w_{+}\left(
	y\right) \right\vert ^{2}dy\leq C_{0}$. Set 
	\begin{eqnarray*}
		\mathcal{A} &\equiv &\left\{ y\in B\left( x,r\right) :w\left( y\right) \leq
		0\right\} , \\
		\mathcal{C} &\equiv &\left\{ z\in B\left( x,r\right) :w\left( z\right) \geq
		1\right\} , \\
		\mathcal{D} &\equiv &\left\{ y\in B\left( x,2r\right) :0<w\left( y\right)
		<1\right\} .
	\end{eqnarray*}%
	Then if $\left\vert \mathcal{A}\right\vert \geq \frac{1}{2}\left\vert
	B\left( x,r\right) \right\vert $, we have 
	\begin{equation}
	C_{0}\left\vert \mathcal{D}\right\vert \geq C_{1}\left( \frac{\left\vert 
		\mathcal{A}\right\vert \left\vert \mathcal{C}\right\vert }{r\left\vert
		B\left( x,r\right) \right\vert }\right) ^{2}.  \label{isoperimetric}
	\end{equation}
\end{lemma}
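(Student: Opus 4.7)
The plan is to introduce the truncation $v = \min(w_+,1)$ and feed it into the proportional vanishing $L^{1}$-Sobolev inequality of Proposition \ref{vanishing}. Since $v$ is a composition of $1$-Lipschitz functions applied to $w$, it is Lipschitz whenever $w$ is (and in any event the inequality extends to the Sobolev setting by approximation). Moreover $v\equiv 0$ on $\mathcal{A}$, $v\equiv 1$ on $\mathcal{C}$, and $0<v<1$ exactly on $\mathcal{D}$, so in particular $\nabla_{A}v$ is supported on $\mathcal{D}$ and $|\nabla_{A}v|\leq |\nabla_{A}w_{+}|$ there.

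Next I would verify the hypothesis of Proposition \ref{vanishing}: the zero set of $v$ contains $\mathcal{A}\subset B(x,r)$ and $|\mathcal{A}|\geq \tfrac{1}{2}|B(x,r)|$ by assumption. Applying (\ref{proportional_sob}) to $v$ therefore yields
\begin{equation*}
\int_{B(x,r)}|v|\,dy \;\leq\; Cr\int_{B(x,2r)}|\nabla_{A}v|\,dy.
\end{equation*}
The left side is bounded below by $|\mathcal{C}|$ since $v=1$ on $\mathcal{C}\subset B(x,r)$. The right side, using that $\nabla_{A}v$ is supported in $\mathcal{D}$ and Cauchy--Schwarz, is bounded above by
\begin{equation*}
Cr\int_{\mathcal{D}}|\nabla_{A}w_{+}|\,dy \;\leq\; Cr\,|\mathcal{D}|^{1/2}\Bigl(\int_{B(x,2r)}|\nabla_{A}w_{+}|^{2}\Bigr)^{1/2} \;\leq\; Cr\,|\mathcal{D}|^{1/2}C_{0}^{1/2}.
\end{equation*}

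Combining, squaring, and using the trivial bound $|\mathcal{A}|\leq |B(x,r)|$ (so that $|\mathcal{A}|^{2}/|B(x,r)|^{2}\leq 1$) gives
\begin{equation*}
\Bigl(\frac{|\mathcal{A}||\mathcal{C}|}{r|B(x,r)|}\Bigr)^{2} \;\leq\; \frac{|\mathcal{C}|^{2}}{r^{2}} \;\leq\; C^{2}\,C_{0}\,|\mathcal{D}|,
\end{equation*}
which is the claimed isoperimetric-type inequality (\ref{isoperimetric}) with $C_{1}=1/C^{2}$.

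Conceptually there is no real obstacle here: once one recognizes that the proportional vanishing $L^{1}$-Sobolev inequality is precisely the substitute for the usual isoperimetric inequality of Caffarelli--Vasseur in the infinitely degenerate regime, the proof is essentially a one-line test-function argument. The only minor point to check carefully is that $v=\min(w_{+},1)$ lies in the admissible class for Proposition \ref{vanishing} (Lipschitz, vanishing on a set of at least half-measure inside $B(x,r)$), and that $|\nabla_{A}v|\leq |\nabla_{A}w_{+}|\mathbf{1}_{\mathcal{D}}$ a.e.\ — both standard chain-rule facts for Sobolev truncations.
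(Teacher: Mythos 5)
Your proof is correct and follows essentially the same route as the paper: the paper also truncates to $\overline{w}=\max\{0,\min\{1,w\}\}$ (identical to your $v=\min(w_{+},1)$), applies the proportional vanishing $L^{1}$-Sobolev inequality with $E=\mathcal{A}$, bounds the left side below by $|\mathcal{C}|$, and uses Cauchy--Schwarz on $\mathcal{D}$; the only cosmetic difference is that the paper multiplies through by $|\mathcal{A}|$ first and then uses $|\mathcal{A}|\leq|B(x,r)|$, whereas you insert that ratio at the end.
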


\begin{proof}
	Let $\overline{w}\left( y\right) \equiv \max \left\{ 0,\min \left\{
	1,w\left( y\right) \right\} \right\} $, and note that $\overline{w}\left(
	z\right) =1$ for $z\in \mathcal{C}$. Then applying (\ref{prop van'}) with $w=%
	\overline{w}$, $B=B\left( x,r_{0}\right) $ and $E=\mathcal{A}$, we have that%
	\begin{eqnarray*}
		\left\vert \mathcal{C}\right\vert \left\vert \mathcal{A}\right\vert &=&\int_{%
			\mathcal{C}}\overline{w}\left( z\right) dz\ \left\vert \mathcal{A}%
		\right\vert \leq \int_{B\left( x,r\right) }\overline{w}\left( z\right) dz\
		\left\vert \mathcal{A}\right\vert \\
		&\leq &Cr\left\vert B\left( x,r\right) \right\vert \int_{B\left( x,2r\right)
		}\left\vert \nabla _{A}\overline{w}\left( y\right) \right\vert
		dy=Cr\left\vert B\left( x,r\right) \right\vert \int_{\mathcal{D}}\left\vert
		\nabla _{A}w\left( y\right) \right\vert dy \\
		&\lesssim &r_{0}\left\vert B\left( x,r\right) \right\vert \sqrt{\left\vert 
			\mathcal{D}\right\vert }\left\Vert \nabla _{A}w\right\Vert _{L^{2}\left(
			B\left( x,2r\right) \right) }\lesssim \sqrt{C_{0}}r\left\vert B\left(
		x,r\right) \right\vert \sqrt{\left\vert \mathcal{D}\right\vert }.
	\end{eqnarray*}%
	Thus we obtain%
	\begin{equation*}
	\left\vert \mathcal{C}\right\vert \left\vert \mathcal{A}\right\vert \lesssim
	r\left\vert B\left( x,r\right) \right\vert \sqrt{C_{0}\left\vert \mathcal{D}%
		\right\vert },
	\end{equation*}%
	or%
	\begin{equation*}
	C_{0}\left\vert \mathcal{D}\right\vert \geq C_{1}\left( \frac{\left\vert 
		\mathcal{A}\right\vert \left\vert \mathcal{C}\right\vert }{r\left\vert
		B\left( x,r\right) \right\vert }\right) ^{2}.
	\end{equation*}
\end{proof}

\section{Continuity of locally bounded weak solutions}

\subsection{Local boundedness}
We first recall Corollary 23 from \cite{KoRiSaSh2} to the local boundedness result that will be used in the proof of continuity theorem.

\begin{corollary}
	\label{renorm}Suppose all the assumptions of Theorem \ref{Local} are
	satisfied. Then 
	\begin{equation}
	\left\Vert u_{+}\right\Vert _{L^{\infty }(\frac{1}{2}B)}\leq A_{N,\varepsilon}\left(
3r\right) \left( \frac{1}{\left\vert 3B\right\vert }\int_{B}u_{+}^{2}\right)
^{\frac{1}{2}}+\left\Vert \phi \right\Vert _{X}\ ,  \label{Inner ball inequ'}
	\end{equation}%
	\begin{equation}
	\text{where }A_{N,\varepsilon}\left( r\right) =C_{1}\exp \left(
	C_{2}\left( \frac{\varphi \left( r\right) }{r}\right) ^{\frac{1}{N-1-\varepsilon}}\right) .  \label{def AN}
	\end{equation}
\end{corollary}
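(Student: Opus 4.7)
The plan is to derive this quantitative ``renormalized'' bound from the qualitative local boundedness in Theorem \ref{Local}, by combining a rescaling argument (which exploits the fact that $\mathcal{L}$ is \emph{special}, i.e., only $u$ and not $\nabla u$ appears nonlinearly) with a careful bookkeeping of the Moser-iteration constants that underlie Theorem \ref{Local}.

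The key rescaling observation is that if $u$ is a weak subsolution of $\mathcal{L}u=\phi$ on $3B$ with quasilinear matrix $\mathcal{A}(x,u)$ satisfying $kA\le \mathcal{A}(x,z)\le KA$, then for any $M>0$ the function $v:=u/M$ is a weak subsolution of $\widetilde{\mathcal{L}}v=\phi/M$ with rescaled matrix $\widetilde{\mathcal{A}}(x,z):=\mathcal{A}(x,Mz)$, and this rescaled matrix satisfies the \emph{same} quadratic form bounds with the \emph{same} structural constants $k,K$. This is exactly where the special quasilinear structure is used: the gradient of $v$ scales linearly, while the matrix gets re-evaluated at a rescaled argument but with unchanged pointwise bounds. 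I would choose
$$M\;:=\;\left(\frac{1}{|3B|}\int_{B}u_+^2\right)^{1/2}+\|\phi\|_X,$$
so that $\bigl(\tfrac{1}{|3B|}\int_B v_+^2\bigr)^{1/2}\le 1$ and $\|\phi/M\|_X\le 1$.

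Next I would apply to $v_+$ the Moser-type iteration built into the proof of Theorem \ref{Local}: test the weak inequality against $v_+^{2\beta-1}\eta^2$ for cutoffs $\eta$ supported in nested balls of radii interpolating between $\tfrac12 r$ and $3r$; use Young's inequality and the quadratic form condition to obtain a Caccioppoli energy estimate on $v_+^\beta\eta$; and then invoke the Orlicz--Sobolev inequality (the goal of the empty subsection above) to upgrade $L^{2\beta}$ control into $L^{2\beta\Phi}$ control, where $\Phi$ is the Orlicz Young function associated with the degenerate Sobolev inequality. Iterating over a carefully chosen sequence of exponents $\beta_k\nearrow\infty$ and radii $\rho_k\searrow\tfrac12 r$, the resulting product of Sobolev constants satisfies
$$\log\!\prod_{k}C_k^{1/\beta_k}\;\lesssim\;\Bigl(\tfrac{\varphi(3r)}{3r}\Bigr)^{1/N},$$
producing the factor $A_N(3r)=C_1\exp\bigl\{C_2(\varphi(3r)/3r)^{1/N}\bigr\}$ in the limit. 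This yields $\|v_+\|_{L^\infty(\tfrac12 B)}\le A_N(3r)$, and multiplying through by $M$ recovers (\ref{Inner ball inequ'}).

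The main obstacle is the convergence of the Moser iteration in the infinitely degenerate regime: unlike the classical elliptic case there is no uniform geometric gain factor $> 1$, only the slowly-growing Orlicz gain coming from $\Phi$. Choosing the sequence $\{\beta_k\}$ so that the telescoping product $\prod_k C_k^{1/\beta_k}$ converges at all, and moreover converges to an \emph{explicit} sub-exponential expression in $\varphi(r)/r$ rather than a divergent series, is the delicate point; the parameter $N$ records precisely how many generations of Orlicz gain are needed before the tail of the product is summable, and $\varphi$ is the gauge on which this balance depends.
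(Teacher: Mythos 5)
The paper does not prove Corollary \ref{renorm} at all; it simply recalls it as Corollary 23 of \cite{KoRiSaSh2}, so there is no internal proof to compare against. With that understood, let me assess your reconstruction on its merits. The renormalization step you describe is sound and is almost certainly the heart of the ``corollary'' reduction: since $\mathcal{L}$ is special quasilinear, $v=u/M$ is a weak subsolution of $\widetilde{\mathcal{L}}v=\phi/M$ with $\widetilde{\mathcal{A}}(x,z)=\mathcal{A}(x,Mz)$ obeying the same quadratic form bounds (\ref{struc_0}) with the same constants $k,K$, and your choice of $M$ normalizes the data and the right-hand side simultaneously, yielding the additive form of (\ref{Inner ball inequ'}) on multiplying back by $M$. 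You have correctly located where the special quasilinear structure is used.

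However, your sketch of the underlying iteration is misattributed, and rescaling alone cannot close the argument from what is actually stated here. Corollary \ref{renorm} is credited to \cite{KoRiSaSh2}, which proceeds via DeGiorgi iteration; the Moser scheme you outline (testing against $v_+^{2\beta-1}\eta^2$, Orlicz--Sobolev gain, telescoping product over $\beta_k\nearrow\infty$) is the approach of \cite{KoRiSaSh1}, as the present paper's abstract itself notes. More to the point, Theorem \ref{Local} as stated is purely qualitative --- it asserts local boundedness with no constant --- so the explicit form $A_N(3r)=C_1\exp\left\{C_2\left(\varphi(3r)/3r\right)^{1/N}\right\}$, and indeed the undefined symbols $\varphi$ and $N$ appearing in it, cannot be extracted from the statement of the theorem by rescaling. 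They come from the quantitative DeGiorgi iteration carried out inside \cite{KoRiSaSh2}, which the present paper does not reproduce. Your proposal therefore conflates two things: a correct observation (the renormalization trick behind the corollary step) and a speculative, likely misdirected account of where the constant $A_N$ actually comes from.
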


\subsection{Caccioppoli inequality}
Recall from \cite[see (3.2)]{KoRiSaSh2} that if $u$ is a weak subsolution to 
$\mathcal{L}u=0$, then we have the standard Cacciopoli inequality 
\begin{equation}\label{standard Cacc}
\int_{B}|\nabla _{A}(\psi u_{+})|^{2}\leq C\left( ||\psi ||_{L^{\infty
}}+||\nabla _{A}\psi ||_{L^{\infty }}\right) ^{2}\int_{B\cup supp\psi
}u_{+}^{2}.
\end{equation}%
We repeat the proof here, which simplifies in the homogeneous setting. From
the assumption that $\nabla A\nabla u=0$ in the weak sense we obtain%
\begin{eqnarray*}
	0 &=&\int \left( \psi ^{2}u_{+}\right) \nabla A\nabla u=-\int \nabla \left(
	\psi ^{2}u_{+}\right) A\nabla u_{+} \\
	&=&-\int 2\psi u_{+}\left( \nabla \psi \right) A\nabla u_{+}-\int \psi
	^{2}\left( \nabla u_{+}\right) A\nabla u_{+}\ ,
\end{eqnarray*}%
which gives%
\begin{equation*}
\int \psi ^{2}\left\vert \nabla _{A}u_{+}\right\vert ^{2}=-\int 2\psi
u_{+}\left( \nabla \psi \right) A\nabla u_{+}\leq 2\left( \int \left\vert
\psi \nabla _{A}u_{+}\right\vert ^{2}\right) ^{\frac{1}{2}}\left( \int
\left\vert \nabla _{A}\psi \right\vert ^{2}u_{+}^{2}\right) ^{\frac{1}{2}},
\end{equation*}%
hence%
\begin{equation*}
\int \left\vert \psi \nabla _{A}u_{+}\right\vert ^{2}\leq 4\int \left\vert
\nabla _{A}\psi \right\vert ^{2}u_{+}^{2}\ .
\end{equation*}%
We now use%
\begin{equation*}
\int_{B}|\nabla _{A}(\psi u_{+})|^{2}=\int_{B}|u_{+}\nabla _{A}\psi +\psi
\nabla _{A}u_{+}|^{2}\leq 2\left\{ \int_{B}|u_{+}\nabla _{A}\psi
|^{2}+\int_{B}|\psi \nabla _{A}u_{+}|^{2}\right\}
\end{equation*}%
to complete the proof of Caccioppoli. 

\subsection{Proof of Theorem \ref{Holder thm}}
We can now prove Theorem \ref{Holder thm} for weak solutions to a
homogeneous degenerate equation. In fact it is easily seen, using Corollary %
\ref{renorm}, that it suffices to prove the following local statement with $%
\frac{1}{2\sqrt{\delta \left( r\right) }}=A_{N,\varepsilon}(3r)$, where $A_{N,\varepsilon}(r)$ is
the constant in the local boundedness inequality (\ref{Inner ball inequ'}) defined in (\ref{def AN}).

\begin{proposition}
	\label{Prop}Let $B_{r}=B\left( x,r\right) $. Suppose that (\ref%
	{isoperimetric}) holds, and also that for some $\delta \left( r\right) >0$,
	we have the following local boundedness inequality,%
	\begin{equation}
	\left\Vert u_{+}\right\Vert _{L^{\infty }\left( B_{\frac{r}{2}}\right) }\leq 
	\frac{1}{2\sqrt{\delta \left( r\right) }}\left( \frac{1}{\left\vert
		B_{3r}\right\vert }\int_{B_{r}}u_{+}^{2}\right) ^{\frac{1}{2}}\ ,\ \ \ \ \ 
	\text{whenever }Lu=0\text{ in }B_{r}\text{ ,}  \label{LB'}
	\end{equation}%
	for all $0<r<r_{0}$. Moreover we assume the summability condition%
	\begin{equation}
	\sum_{j=1}^{\infty }\lambda _{j}=\infty ,  \label{summ cond}
	\end{equation}%
	where $r_{j}=\frac{r_{0}}{4^{j}}$ and $\lambda _{j}\equiv \frac{1}{2^{3+%
			\frac{C_{3}}{\delta ^{2}(r_{j})}}}$ for $j\geq 1$, and where $C_{3}$ is a
	positive constant. Then if $u$ is a weak solution to $\mathcal{L}u=0$ in $%
	B_{r_{0}}$, we conclude that $u$ is continuous at $x$.
\end{proposition}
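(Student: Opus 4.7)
The plan is a De Giorgi oscillation-decay argument on the dyadic scales $r_j = r_0/4^j$. Writing $M_j = \sup_{B(x,r_j)} u$, $m_j = \inf_{B(x,r_j)} u$, and $\omega_j = M_j - m_j$, I would reduce continuity at $x$ to the single-scale gain
\[
\omega_{j+1} \le (1-\lambda_j)\,\omega_j, \qquad j \ge 1,
\]
since telescoping together with (\ref{summ cond}) then yields $\omega_j \le \exp(-\sum_{i<j}\lambda_i)\,\omega_0 \to 0$. At scale $r_j$, normalize via $v = 2(u-m_j)/\omega_j - 1$, so that $-1 \le v \le 1$ on $B(x,r_j)$ and $v$ is a weak solution of the same linear equation (with matrix field $\mathcal{A}(\cdot,u)$). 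Replacing $u$ by $-u$ if necessary, assume $|\{v \le 0\}\cap B(x,r_j)| \ge \tfrac12|B(x,r_j)|$. It then suffices to establish $v \le 1-2^{-K-1}$ on $B(x,r_j/2) \supset B(x,r_{j+1})$ for a suitably chosen integer $K = K(j)$.

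The heart of the argument is a finite De~Giorgi iteration on the truncations $w_k = 2^k(v-L_k)_+$, where $L_k = 1-2^{-k}$. Each $w_k$ is a nonnegative weak subsolution bounded by $1$ on $B(x,r_j)$, and the standard Caccioppoli inequality (\ref{standard Cacc}) with a cutoff $\psi$ supported in $B(x,2r_j)$, equal to $1$ on $B(x,r_j)$, and satisfying $|\nabla_A\psi| \lesssim 1/r_j$ yields
\[
\int_{B(x,2r_j)} |\nabla_A w_k|^2 \,\lesssim\, \frac{|B(x,2r_j)|}{r_j^2}.
\]
Setting $\mathcal{A}_k = \{v \le L_k\}\cap B(x,r_j)$, $\mathcal{C}_k = \{v \ge L_{k+1}\}\cap B(x,r_j)$, and $\mathcal{D}_k = \{L_k < v < L_{k+1}\}\cap B(x,2r_j)$, the inclusion $\{v \le 0\}\subseteq \mathcal{A}_k$ preserves the hypothesis $|\mathcal{A}_k|\ge \tfrac12|B(x,r_j)|$ for every $k$, so the DeGiorgi Lemma \ref{DeG Lemma} gives
\[
|\mathcal{C}_k|^2 \,\lesssim\, |B(x,r_j)|\,|\mathcal{D}_k|.
\]
The sets $\mathcal{D}_k$ are pairwise disjoint in $B(x,2r_j)$, so summing over $k < K$ and using monotonicity of $|\mathcal{C}_k|$ gives $|\mathcal{C}_{K-1}|/|B(x,r_j)| \lesssim K^{-1/2}$.

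To close the step, I apply the local boundedness (\ref{LB'}) to the subsolution $w_K$. Because $\mathrm{supp}\,w_K \subseteq \mathcal{C}_{K-1}$ and $w_K \le 1$, $\int_{B(x,r_j)} w_K^2 \le |\mathcal{C}_{K-1}|$, and the doubling $|B(x,3r_j)| \approx |B(x,r_j)|$ gives
\[
\|w_K\|_{L^\infty(B(x,r_j/2))} \,\le\, \frac{1}{2\sqrt{\delta(r_j)}}\sqrt{\frac{|\mathcal{C}_{K-1}|}{|B(x,3r_j)|}} \,\lesssim\, \frac{K^{-1/4}}{\sqrt{\delta(r_j)}}.
\]
Choosing $K = \lceil C_3/\delta(r_j)^2 \rceil$ with $C_3$ sufficiently large forces this below $\tfrac12$, i.e., $v \le L_K + 2^{-K-1} = 1-2^{-K-1}$ on $B(x,r_j/2)$. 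Unwinding the normalization yields $\omega_{j+1} \le (1-2^{-K-2})\omega_j$, so $\lambda_j \ge 2^{-K-2} \ge 2^{-3-C_3/\delta(r_j)^2}$, matching the hypothesized formula.

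The main obstacle is the calibration of $K$: the $K^{-1/4}$ measure decay produced by $K$ rounds of De~Giorgi iteration must compensate for the $1/\sqrt{\delta(r_j)}$ blow-up of the local boundedness constant, forcing $K \sim 1/\delta(r_j)^2$ and producing the double-exponentially small gain $\lambda_j$; the summability hypothesis (\ref{summ cond}) is precisely what is needed to close the telescope. Beyond this calibration the argument is standard De~Giorgi machinery, now made available in the infinitely degenerate regime by the proportional vanishing $L^1$-Sobolev inequality (Proposition \ref{vanishing}) that underlies Lemma \ref{DeG Lemma}.
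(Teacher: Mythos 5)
Your proposal is correct in substance and follows the same architecture as the paper's proof: reduce continuity to a dyadic oscillation decay, normalize to $\left\vert v\right\vert \leq 1$, run the De Giorgi truncations $w_{k}=2^{k}\left( v-\left( 1-2^{-k}\right) \right) $ through Caccioppoli and Lemma \ref{DeG Lemma}, and after $K\sim \delta \left( r\right) ^{-2}$ steps feed the smallness of $\frac{1}{\left\vert B_{3r}\right\vert }\int_{B_{r}}\left( w_{K}\right) _{+}^{2}$ into (\ref{LB'}) to get $w_{K}\leq \frac{1}{2}$, hence $v\leq 1-2^{-K-1}$ and $\lambda \left( r\right) =2^{-3-C_{3}/\delta \left( r\right) ^{2}}$. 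The one structural difference is how the finite iteration is terminated. The paper runs a stopping--time dichotomy: it iterates only as long as $\int_{B_{r}}\left( w_{k+1}\right) _{+}^{2}\geq \delta \left( r\right) \left\vert B_{3r}\right\vert $, in which case $\left\vert \mathcal{C}_{k}\right\vert \geq \delta \left( r\right) \left\vert B_{3r}\right\vert $ and Lemma \ref{DeG Lemma} forces $\left\vert \mathcal{D}_{k}\right\vert \geq \alpha \left( r\right) =c\,\delta \left( r\right) ^{2}\left\vert B_{3r}\right\vert $ at every step, so disjointness of the $\mathcal{D}_{k}$ in $B_{2r}$ caps the number of steps at $\left\vert B_{2r}\right\vert /\alpha \left( r\right) +1\leq C_{3}/\delta \left( r\right) ^{2}+1$. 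You instead fix $K$ in advance, sum the DeGiorgi inequality over $k<K$, and use monotonicity of $\left\vert \mathcal{C}_{k}\right\vert $ to extract the quantitative decay $\left\vert \mathcal{C}_{K-1}\right\vert \lesssim \left\vert B_{3r}\right\vert K^{-1/2}$ (the Caffarelli--Vasseur style count). The two variants give the same calibration $K\sim \delta ^{-2}$ and the same $\lambda _{j}$; the paper's version never needs the decay of $\left\vert \mathcal{C}_{k}\right\vert $, only the termination, while yours makes the measure decay explicit.

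One point needs correcting: the appeal to ``the doubling $\left\vert B\left( x,3r_{j}\right) \right\vert \approx \left\vert B\left( x,r_{j}\right) \right\vert $'' is false in this regime. For balls centered on the degenerate hyperplane, (\ref{measure}) gives $\left\vert B\left( 0,3r\right) \right\vert /\left\vert B\left( 0,r\right) \right\vert \approx e^{F\left( r\right) -F\left( 3r\right) }$ up to the comparable factors $\left\vert F^{\prime }\right\vert ^{2}$, and for the infinitely degenerate geometries at hand $F\left( r\right) -F\left( 3r\right) \gtrsim r\left\vert F^{\prime }\left( r\right) \right\vert \rightarrow \infty $ as $r\rightarrow 0$; the failure of doubling is exactly what makes this setting delicate. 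Your argument survives because only the one--sided inequalities $\left\vert B_{r}\right\vert \leq \left\vert B_{2r}\right\vert \leq \left\vert B_{3r}\right\vert $ are actually needed: with the cutoff taken (as in the paper) equal to $1$ on $B_{2r_{j}}$ and supported in $B_{3r_{j}}$ --- your cutoff, equal to $1$ only on $B_{r_{j}}$, does not control $\int_{B_{2r_{j}}}\left\vert \nabla _{A}w_{k}\right\vert ^{2}$ as Lemma \ref{DeG Lemma} requires --- the DeGiorgi lemma yields $\left\vert \mathcal{C}_{k}\right\vert ^{2}\lesssim \left\vert B_{3r_{j}}\right\vert \left\vert \mathcal{D}_{k}\right\vert $ rather than $\left\vert B_{r_{j}}\right\vert \left\vert \mathcal{D}_{k}\right\vert $, the sum gives $\left\vert \mathcal{C}_{K-1}\right\vert \lesssim \left\vert B_{3r_{j}}\right\vert K^{-1/2}$, and since (\ref{LB'}) normalizes by the larger ball $\left\vert B_{3r_{j}}\right\vert $ one still gets $\frac{1}{\left\vert B_{3r_{j}}\right\vert }\int_{B_{r_{j}}}\left( w_{K}\right) _{+}^{2}\lesssim K^{-1/2}$. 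The paper tracks $\left\vert B_{r}\right\vert $, $\left\vert B_{2r}\right\vert $ and $\left\vert B_{3r}\right\vert $ separately for precisely this reason, and your write--up should do the same.
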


We now reduce the proof of Proposition \ref{Prop} to proving Proposition \ref%
{osc} below, which shows that if a solution $v$ is bounded by $1$ in a ball $%
B_{3r}$, and is nonpositive on a `sufficiently large' subset of the smaller
ball $B_{r}$, then $v$ is in fact bounded by a constant \emph{less} than $1$
in the smaller ball $B_{r}$. This reduction is achieved in two steps by way
of the following lemma.

\begin{lemma}
	\label{oscill}Suppose that the local boundedness property in Proposition \ref%
	{Prop} holds, equivalently (\ref{LB'}). Let $u$ be a weak solution of $%
	\mathcal{L}u=0$ in $B_{r_{0}}$. Then with $\lambda \left( r\right) \equiv 
	\frac{1}{2^{3+\frac{C_{3}}{\delta (r)^{2}}}}$ we have%
	\begin{equation*}
	\limfunc{osc}_{B_{\frac{r}{2}}}u\leq \left( 1-\frac{\lambda \left( r\right) 
	}{2}\right) \limfunc{osc}_{B_{r}}u,\ \ \ \ \ 0<r<r_{0}.
	\end{equation*}
\end{lemma}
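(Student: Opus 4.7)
The plan is to derive the oscillation decay from Proposition \ref{osc} by the standard normalization-and-dichotomy argument; the real work sits inside Proposition \ref{osc} itself.

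Fix $0<r<r_0$. Put $M=\sup_{B_r} u$, $m=\inf_{B_r} u$, and $\omega = M-m = \limfunc{osc}_{B_r} u$. If $\omega=0$ the conclusion is trivial, so I assume $\omega>0$ and introduce the affine rescaling
\[
v(y) \equiv \frac{2(u(y)-m)}{\omega} - 1,
\]
which takes values in $[-1,1]$ on $B_r$. Because the structural quadratic form bound $\mathcal{A}(x,z)\sim M_F(x_1)$ of (\ref{struc_0}) is uniform in $z$, the affine change of dependent variable $u\mapsto v$ only replaces the coefficient matrix $\mathcal{A}(x,u(x))$ by an equivalent one satisfying (\ref{struc_0}) with the same constants $k,K$. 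Hence $v$, and likewise $-v$, is a weak solution in $B_{r_0}$ of a homogeneous equation of the same type to which Proposition \ref{osc} and the local boundedness (\ref{LB'}) both apply.

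Next I apply the dichotomy: at least one of the two sets $\{y\in B_r:v(y)\leq 0\}$ and $\{y\in B_r:v(y)\geq 0\}$ has measure $\geq \frac{1}{2}|B_r|$. Replacing $v$ by $-v$ in the second case, I may assume without loss of generality that $v\leq 1$ everywhere in $B_r$ and that $v$ is nonpositive on a subset of $B_r$ of measure at least $\frac{1}{2}|B_r|$. This is precisely the hypothesis of Proposition \ref{osc}, which produces
\[
v(y) \leq 1 - \lambda(r) \qquad \text{for a.e. } y \in B_{r/2},
\]
where $\lambda(r) = 2^{-3-C_3/\delta(r)^2}$. Unwinding the definition of $v$, the inequality $v\leq 1-\lambda(r)$ on $B_{r/2}$ reads $u \leq M - \frac{\lambda(r)}{2}\omega$ on $B_{r/2}$, while trivially $u\geq m$ on $B_{r/2}\subset B_r$. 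Subtracting,
\[
\limfunc{osc}_{B_{r/2}} u \leq \omega - \frac{\lambda(r)}{2}\omega = \left(1-\frac{\lambda(r)}{2}\right)\limfunc{osc}_{B_r} u,
\]
which is the asserted estimate.

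The main obstacle is not in Lemma \ref{oscill} itself but in the black box Proposition \ref{osc}: there one must combine the De Giorgi Lemma \ref{DeG Lemma} with the local boundedness hypothesis (\ref{LB'}) and iterate through dyadically shrinking level sets to promote the measure-theoretic hypothesis ``$v\leq 0$ on a set of proportional size'' into the pointwise improvement $v\leq 1-\lambda(r)$ on the inner ball. The specific exponential form $\lambda(r) = 2^{-3-C_3/\delta(r)^2}$, which then feeds into the summability hypothesis (\ref{summ cond}) driving continuity, will be produced by counting the iterations of the De Giorgi Lemma needed to bridge the gap between $|\mathcal{A}|\geq \frac{1}{2}|B_r|$ and $|\mathcal{C}|=0$ on the inner ball. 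Once Proposition \ref{osc} is in hand, the present lemma is pure bookkeeping with the normalization.
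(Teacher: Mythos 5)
Your proof is correct and takes essentially the same route as the paper: both derive Lemma \ref{oscill} from Proposition \ref{osc} by affinely renormalizing $u$ to $v$ with $-1\leq v\leq 1$ on $B_r$, invoking the dichotomy that one of $\{v\leq 0\}\cap B_r$, $\{v\geq 0\}\cap B_r$ has proportional measure, and unwinding the resulting bound $\sup_{B_{r/2}} v\leq 1-\lambda(r)$. Your explicit observation that the affine change of dependent variable preserves the quadratic-form hypothesis (\ref{struc_0}) (since the comparability is uniform in $z$) is a worthwhile detail that the paper leaves implicit.
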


Indeed, iterating Lemma \ref{oscill} gives%
\begin{equation*}
\limfunc{osc}_{B_{\frac{r}{4^{\ell }}}}u\leq \left\{
\dprod\limits_{j=1}^{\ell }\left( 1-\frac{\lambda \left( \frac{r}{4^{j}}%
	\right) }{2}\right) \right\} \limfunc{osc}_{B_{r}}u,\ \ \ \ \ \ell \geq 1,
\end{equation*}%
which implies $\lim_{\ell \rightarrow \infty }\limfunc{osc}_{B_{\frac{r}{%
			4^{\ell }}}}u=0$ since the infinite product above vanishes if the
summability condition (\ref{summ cond}) holds. Hence $u$ is continuous at $x$%
, which proves Proposition \ref{Prop}.

Next we note that Lemma \ref{oscill} is in turn an easy consequence of this
proposition.

\begin{proposition}
	\label{osc}Suppose that the local boundedness property (\ref{LB'}) holds.
	With $r$ sufficiently small, let%
	\begin{equation*}
	\lambda \left( r\right) \equiv \frac{1}{2^{3+\frac{C_{3}}{\delta(r)^{2}}}}%
	\in \left( 0,1\right) .
	\end{equation*}%
	Let $v\leq 1$ and $\mathcal{L}v=0$ in $B_{3r}$. Assume that $\left\vert
	B_{r}\cap \left\{ v\leq 0\right\} \right\vert \geq \frac{1}{2}\left\vert
	B_{r}\right\vert $. Then 
	\begin{equation*}
	\sup_{B_{\frac{r}{2}}}v\leq 1-\lambda \left( r\right) .
	\end{equation*}
\end{proposition}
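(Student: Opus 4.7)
The plan is to run a DeGiorgi iteration driven by Lemma~\ref{DeG Lemma}, with Caccioppoli (\ref{standard Cacc}) providing the uniform energy bound at each level and the local boundedness (\ref{LB'}) performing the final $L^\infty$ reduction. For each $k\geq 0$ set $v_k := (v-(1-2^{-k}))_+$, a nonnegative weak subsolution of $\mathcal{L}(\,\cdot\,)=0$ on $B_{3r}$ with $v_k\leq 2^{-k}$, and define the level sets
\begin{equation*}
\mathcal{A}_k := \{v\leq 1-2^{-k}\}\cap B_r,\qquad \mathcal{C}_k := \{v\geq 1-2^{-k-1}\}\cap B_r,\qquad \mathcal{D}_k := \{1-2^{-k}<v<1-2^{-k-1}\}\cap B_{2r}.
\end{equation*}
Since $\mathcal{A}_k\supseteq\mathcal{A}_0\supseteq\{v\leq 0\}\cap B_r$, the hypothesis propagates to every level as $|\mathcal{A}_k|\geq \tfrac{1}{2}|B_r|$.

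Next I would extract a $k$-uniform energy bound. Applying (\ref{standard Cacc}) to $v_k$ with a cutoff $\psi$ equal to $1$ on $B_{2r}$, supported in $B_{3r}$, and satisfying $\|\nabla_A\psi\|_\infty\lesssim 1/r$ gives
\begin{equation*}
\int_{B_{2r}}|\nabla_A v_k|^2 \lesssim \frac{1}{r^2}\int_{B_{3r}} v_k^2 \lesssim \frac{4^{-k}|B_{3r}|}{r^2}.
\end{equation*}
Rescaling $\tilde v_k := 2^{k+1}v_k$ makes $\{\tilde v_k\leq 0\},\{\tilde v_k\geq 1\},\{0<\tilde v_k<1\}$ coincide with $\mathcal{A}_k,\mathcal{C}_k,\mathcal{D}_k$, and transforms the bound into $\int_{B_{2r}}|\nabla_A(\tilde v_k)_+|^2\leq C|B_{3r}|/r^2 =: C_0$, independent of $k$. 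Lemma~\ref{DeG Lemma} then yields
\begin{equation*}
|\mathcal{D}_k| \gtrsim \frac{|\mathcal{C}_k|^2}{|B_{3r}|}\qquad\text{for every }k\geq 0.
\end{equation*}

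The $\mathcal{D}_k$ correspond to pairwise disjoint intervals of $v$ and lie in $B_{2r}$, so $\sum_k|\mathcal{D}_k|\leq |B_{2r}|\leq |B_{3r}|$. A pigeonhole argument therefore shows that if $|\mathcal{C}_k|\geq\eta$ for all $k\leq K-1$, then $K\eta^2\lesssim |B_{3r}|^2$. Choosing $\eta := \delta(r)|B_{3r}|$ produces an index $K_0\leq C_3/\delta(r)^2$ with $|\mathcal{C}_{K_0}|/|B_{3r}|\leq \delta(r)$. Since $u := v-(1-2^{-K_0-1})$ is a weak solution of $\mathcal{L}u=0$ in $B_{3r}$ with $u_+\leq 2^{-K_0-1}$ supported in $\mathcal{C}_{K_0}$ on $B_r$, the local boundedness (\ref{LB'}) gives
\begin{equation*}
\sup_{B_{r/2}}u_+ \leq \frac{1}{2\sqrt{\delta(r)}}\cdot 2^{-K_0-1}\sqrt{|\mathcal{C}_{K_0}|/|B_{3r}|} \leq 2^{-K_0-2},
\end{equation*}
hence $\sup_{B_{r/2}}v\leq 1-2^{-K_0-2}$. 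With $K_0\leq C_3/\delta(r)^2$ this matches the stated $\lambda(r)=2^{-(3+C_3/\delta(r)^2)}$.

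The main obstacle is bookkeeping rather than anything conceptual. The rescaling by $2^{k+1}$ must produce an energy constant $C_0$ that is genuinely $k$-independent: this works because the $L^\infty$-bound $v_k\leq 2^{-k}$ on $B_{3r}$ delivers a factor $4^{-k}$ through Caccioppoli that exactly cancels the $4^{k+1}$ from rescaling the gradient. A second point worth care is the choice $\eta=\delta(r)|B_{3r}|$ in the pigeonhole, which implicitly uses the uniform comparability $|B_r|\asymp|B_{3r}|$ in the degenerate control geometry; this is standard here but must be invoked to get exactly the form of $\lambda(r)$ requested in the statement.
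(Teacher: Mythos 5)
Your proposal is correct and follows essentially the same route as the paper's proof: the same truncations at levels $1-2^{-k}$, the same $k$-uniform Caccioppoli energy bound, the same application of Lemma \ref{DeG Lemma} to force $\left\vert \mathcal{D}_{k}\right\vert \gtrsim \delta(r)^{2}\left\vert B_{3r}\right\vert$, and the same final use of (\ref{LB'}). Your pigeonhole over the disjoint sets $\mathcal{D}_{k}$ is just a repackaging of the paper's telescoping bound $\left\vert B_{2r}\right\vert \geq k\alpha(r)$, so the two arguments coincide in substance.
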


Indeed, to prove Lemma \ref{oscill} from Proposition \ref{osc}, let%
\begin{equation*}
v\left( x\right) \equiv \frac{2}{\limfunc{osc}_{B_{r}}u}\left\{ u\left(
x\right) -\frac{\sup_{B_{r}}u+\inf_{B_{r}}u}{2}\right\} ,
\end{equation*}%
so that $-1\leq v\leq 1$ on $B_{r}$. If $\left\vert \left\{ v\leq 0\right\}
\cap B_{r}\right\vert \geq \frac{1}{2}\left\vert B_{r}\right\vert $, then
Proposition \ref{osc} gives $\limfunc{osc}_{B_{\frac{r}{2}}}v\leq 2-\lambda
\left( r\right) $, which implies%
\begin{equation*}
\limfunc{osc}_{B_{\frac{r}{2}}}u\leq \left( 1-\frac{\lambda \left( r\right) 
}{2}\right) \limfunc{osc}_{B_{r}}u.
\end{equation*}%
If instead we have $\left\vert \left\{ v\geq 0\right\} \cap B_{r}\right\vert
\geq \frac{1}{2}\left\vert B_{r}\right\vert $, we obtain the same inequality
by applying the above argument to $-v$. This completes the proof\ of Lemma %
\ref{oscill}.

Thus matters have been reduced to proving Proposition \ref{osc}.

\begin{proof}[Proof of Proposition \protect\ref{osc}]
	Define 
	\begin{equation*}
	w_{k}=2^{k}\left( v-(1-2^{-k})\right)
	\end{equation*}%
	and let $\varphi \in C_{0}^{\infty }(B_{3r})$ be such that $\varphi =1$ on $%
	B_{2r}$ and $||\nabla _{A}\varphi ||_{L^{\infty }(B_{3r})}\leq \frac{2}{r}$.
	Using $\mathcal{L}w_{k}=0$ in $B_{3r_{0}}$ and the standard Caccioppoli
	inequality (\ref{standard Cacc}) with $\phi =0$ we have 
	\begin{equation*}
	\int_{B_{2r}}|\nabla _{A}(w_{k})_{+}|^{2}\leq \int_{B_{3r}}|\nabla
	_{A}(\varphi (w_{k})_{+})|^{2}\leq C||\nabla _{A}\varphi ||_{L^{\infty
	}}^{2}\int_{B_{3r}\cap supp\varphi }(w_{k})_{+}^{2}\leq 4C\left\vert
	B_{3r}\right\vert r^{-2}\ ,
	\end{equation*}%
	where the last inequality follows from the fact that $w_{k}\leq 1$, which in
	turn follows from the assumption that $v\leq 1$. Define%
	\begin{eqnarray*}
		\mathcal{A}_{k} &=&\left\{ 2w_{k}\leq 0\right\} \cap B_{r}\ , \\
		\mathcal{C}_{k} &=&\left\{ 2w_{k}\geq 1\right\} \cap B_{r}\ , \\
		\mathcal{D}_{k} &=&\left\{ 0<2w_{k}<1\right\} \cap B_{2r}\ .
	\end{eqnarray*}%
	Also note that $v\leq 0$ implies $w_{k}\leq 0$, and therefore we have 
	\begin{equation}
	\left\vert \mathcal{A}_{k}\right\vert =\left\vert \left\{ 2w_{k}\leq
	0\right\} \cap B_{r}\right\vert =\left\vert \left\{ w_{k}\leq 0\right\} \cap
	B_{r}\right\vert \geq \frac{1}{2}\left\vert B_{r}\right\vert ,  \label{A}
	\end{equation}%
	so that the hypothesis $\left\vert \mathcal{A}_{k}\right\vert \geq \frac{1}{2%
	}\left\vert B_{r}\right\vert $ in the proportional vanishing $L^{1}$-Sobolev
	inequality (\ref{prop van'}) holds.
	
	We will apply Lemma \ref{DeG Lemma} with $w=$ $2w_{k}$ recursively for $%
	k=0,1,2,...$ below, but only as long as 
	\begin{equation}
	\int_{B_{r}}(w_{k+1})_{+}^{2}\frac{dx}{|B_{3r}|}\geq \delta \left( r\right) ,
	\label{C'}
	\end{equation}%
	where $\delta \left( r\right) $ is the positive constant in Proposition \ref%
	{Prop}. We use both (\ref{A}) and (\ref{C'}), and the fact that%
	\begin{equation*}
	w_{k+1}=2w_{k}-1
	\end{equation*}%
	implies $\mathcal{C}_{k}=\left\{ w_{k+1}\geq 0\right\} \cap B_{r}$, and
	hence 
	\begin{equation*}
	\left\vert \mathcal{C}_{k}\right\vert \geq \int_{\mathcal{C}%
		_{k}}(w_{k+1})_{+}^{2}dx=\int_{B_{r}}(w_{k+1})_{+}^{2}dx\geq \delta \left(
	r\right) |B_{3r}|,
	\end{equation*}%
	to obtain from Lemma \ref{DeG Lemma} that%
	\begin{eqnarray*}
		\left\vert \left\{ 0<w_{k}<\frac{1}{2}\right\} \cap B_{2r}\right\vert
		&=&\left\vert \left\{ 0<2w_{k}<1\right\} \cap B_{2r}\right\vert =\left\vert 
		\mathcal{D}_{k}\right\vert \\
		&\geq &\frac{C_{1}}{C\left\vert B_{3r}\right\vert r^{-2}}\left( \frac{\frac{1%
			}{2}\left\vert B_{r}\right\vert \delta \left( r\right) |B_{3r}|}{r\left\vert
			B_{r}\right\vert }\right) ^{2}=\frac{C_{1}}{4C}\delta \left( r\right)
		^{2}\left\vert B_{3r}\right\vert =\alpha \left( r\right) ,
	\end{eqnarray*}%
	where 
	\begin{equation}
	\alpha \left( r\right) \equiv \frac{C_{1}}{4C}\delta \left( r\right)
	^{2}\left\vert B_{3r}\right\vert >0  \label{def alpha}
	\end{equation}%
	depends on $r$, but not on $k$ or $v$. This gives 
	\begin{align*}
	\left\vert B_{2r}\right\vert & \geq \left\vert \left\{ w_{k}\leq 0\right\}
	\cap B_{2r}\right\vert =|\{2w_{k-1}\leq 1\}\cap B_{2r}| \\
	& =\left\vert \left\{ w_{k-1}\leq 0\right\} \cap B_{2r}\right\vert
	+\left\vert \left\{ 0<w_{k-1}\leq \frac{1}{2}\right\} \cap B_{2r}\right\vert
	\\
	& \geq \left\vert \left\{ w_{k-1}\leq 0\right\} \cap B_{2r}\right\vert
	+\alpha \\
	& \vdots \\
	& \geq \left\vert \left\{ w_{0}\leq 0\right\} \cap B_{2r}\right\vert
	+k\alpha \left( r\right) \geq k\alpha \left( r\right) ,
	\end{align*}
	
	The above inequality, namely $\left\vert B_{2r}\right\vert \geq k\alpha
	\left( r\right) $, must fail for a finite $k$ \emph{independent} of $v$, in
	fact it fails for the unique integer $k_{0}\geq 0$ satisfying $\frac{%
		\left\vert B_{2r}\right\vert }{\alpha \left( r\right) }<k_{0}\leq \frac{%
		\left\vert B_{2r}\right\vert }{\alpha \left( r\right) }+1$, and for this $%
	k_{0}$ we must then have 
	\begin{equation*}
	\int_{B_{r}}(w_{k_{0}+1})_{+}^{2}dx<\delta \left( r\right) \left\vert
	B_{3r}\right\vert .
	\end{equation*}
	
	By the local boundedness inequality (\ref{LB'}), we conclude that in the
	ball $B_{\frac{r}{2}}$, we have 
	\begin{equation*}
	w_{k_{0}+1}\leq \left\Vert \left( w_{k_{0}+1}\right) _{+}\right\Vert
	_{L^{\infty }\left( B_{\frac{r}{2}}\right) }\leq \frac{1}{2\sqrt{\delta
			\left( r\right) }}\left( \frac{1}{\left\vert B_{3r}\right\vert }%
	\int_{B_{r}}\left( w_{k_{0}+1}\right) _{+}^{2}\right) ^{\frac{1}{2}}<\frac{1%
	}{2\sqrt{\delta \left( r\right) }}\sqrt{\delta \left( r\right) }=\frac{1}{2}.
	\end{equation*}%
	Rescaling back to $v$ now gives%
	\begin{eqnarray*}
		&&2^{k_{0}+1}\left[ v-\left( 1-\frac{1}{2^{k_{0}+1}}\right) \right]
		_{+}=\left( w_{k_{0}+1}\right) _{+}\leq \frac{1}{2}\text{ in }B_{\frac{r}{2}}
		\\
		\Longrightarrow \text{ \ \ \ \ } &&v\leq 1-\frac{1}{2^{k_{0}+1}}+\frac{1}{2}%
		\frac{1}{2^{k_{0}+1}}=1-\frac{1}{2^{k_{0}+2}}\text{ in }B_{\frac{r}{2}}.
	\end{eqnarray*}%
	Finally, we note that (\ref{def alpha}) gives $\alpha \left( r\right) =\frac{%
		C_{1}}{4C}\delta \left( r\right) ^{2}\left\vert B_{3r}\right\vert $, and
	hence 
	\begin{equation*}
	k_{0}\leq \frac{\left\vert B_{2r}\right\vert }{\alpha \left( r\right) }+1=%
	\frac{\left\vert B_{2r}\right\vert }{\frac{C_{1}}{4C}\delta \left( r\right)
		^{2}\left\vert B_{3r}\right\vert }+1\leq \frac{C_{3}}{\delta \left( r\right)
		^{2}}+1,
	\end{equation*}%
	and thus that 
	\begin{equation*}
	\frac{1}{2^{k_{0}+2}}\geq \frac{1}{2^{3+\frac{C_{3}}{\delta \left( r\right)
				^{2}}}}=\lambda (r).
	\end{equation*}
\end{proof}

We can now obtain our main result,
Theorem \ref{Holder thm}, directly from Part (1) of Theorem \ref{Local} and Proposition \ref{Prop}. To check the summability condition \ref{summ cond} we first note that for the geometry $F_{\sigma ,k}$,
i.e. $F\left( r\right) =F_{\sigma ,k}=\left( \ln \frac{1}{r}\right) \left(
\ln ^{(k)}\frac{1}{r}\right) ^{\sigma }$, we have 
\begin{equation*}
\left\vert F^{\prime }\left( r\right) \right\vert \approx \frac{1}{r}\left(
\ln ^{(k)}\frac{1}{r}\right) ^{\sigma },\quad F^{\prime \prime }\left(
r\right) \leq (1+\varepsilon )\frac{|F^{\prime }\left( r\right) |}{r},
\end{equation*}%
where $\varepsilon $ can be made arbitrarily small by choosing $r_{0}$
sufficiently small. Thus the conditions of Proposition (\ref{sob_nd}) are
satisfied and we have the following estimate for the superradius: 
\begin{equation*}
\varphi \left( r\right) \leq Cr\left( \ln ^{(k)}\frac{1}{r}\right) ^{\sigma
	N}.
\end{equation*}%
Now we use the formula (\ref{def AN}),%
\begin{equation*}
A_{N,\varepsilon}\left( r\right) \equiv C_{1}\exp \left\{ C_{2}\left( \frac{\varphi
	\left( r\right) }{r}\right) ^{\frac{1}{N-1-\varepsilon}}\right\} ,
\end{equation*}%
together with $A_{N,\varepsilon}\left( 3r\right) =\frac{1}{2\sqrt{%
		\delta \left(r\right) }}$, to obtain that for
the geometry $F=F_{k,\sigma }$ we have%
\begin{equation*}
\delta \left( r\right) =\frac{1}{4A_{N,\varepsilon}\left( 3r\right) ^{2}}=\frac{1}{%
	4C_{1}^{2}}e^{-2C_{2}\left( \frac{\varphi \left( 3r\right) }{3r}\right) ^{%
		\frac{1}{N-1-\varepsilon}}}\geq e^{-C^{\prime }\left( \ln ^{(k)}\frac{1}{r}\right)
	^{\frac{\sigma N}{N-1-\varepsilon} }}.
\end{equation*}
Note that if $\sigma <1$ we can find $N>1$ satisfying
\[
N>\frac{1+\varepsilon}{1-\sigma},
\]
which implies
\[
\gamma\equiv \frac{\sigma N}{N-1-\varepsilon}<1.
\]
Now choose $r_{j}=4^{-j}r_{0}$ and let $k=3$, we thus have 
\begin{equation*}
\frac{1}{\delta \left( r_{j}\right) ^{2}}\leq e^{C^{\prime \prime }\left[
	\ln ^{\left( 3\right) }\left( \frac{1}{r_{j}}\right) \right] ^{\gamma }}.
\end{equation*}%
Now we use $\lambda _{j}\equiv \frac{1}{2^{3+\frac{C_{3}}{\delta ^{2}(r_{j})}%
}}$ from (\ref{summ cond}), together with the inequality%
\begin{eqnarray*}
	&&\left[ \ln ^{\left( k\right) }a\right] ^{\gamma }\leq \varepsilon \ln
	^{\left( k\right) }a,\ \ \ \ \ \text{for }a\text{ sufficiently large} \\
	&&\text{\ \ \ \ \ \ \ \ \ \ \ \ \ \ \ depending on }\gamma <1\text{ and }%
	\varepsilon >0,
\end{eqnarray*}%
to obtain 
\begin{eqnarray*}
	\sum_{j=1}^{\infty }\lambda _{j} &=&\sum_{j=1}^{\infty }\frac{1}{2^{3+\frac{%
				C_{3}}{\delta ^{2}(r_{j})}}}\approx \sum_{j=1}^{\infty }\frac{1}{%
		2^{3+C_{3}e^{C^{\prime \prime }\left[ \ln ^{\left( 3\right) }\left( \frac{1}{%
					r_{j}}\right) \right] ^{\gamma }}}} \\
	&\gtrsim &\sum_{j=1}^{\infty }2^{-C^{\prime \prime \prime }\varepsilon \ln
		^{\left( 2\right) }\left( \frac{4^{j}}{r_{0}}\right) }\gtrsim
	\sum_{j=1}^{\infty }e^{-C^{\prime \prime \prime \prime }\varepsilon \ln
		j}=\sum_{j=1}^{\infty }\frac{1}{j^{C^{\prime \prime \prime \prime
			}\varepsilon }}=\infty ,
\end{eqnarray*}%
if $\varepsilon >0$ is chosen sufficiently small. This establishes the summability condition and finishes the proof of Theorem \ref{Holder thm}.

\end{document}